\crefname{enumi}{}{}
\newtheorem{theorem}{Theorem}[section]
\newtheorem{lemma}[theorem]{Lemma}
\newtheorem{proposition}[theorem]{Proposition}
\newtheorem{corollary}[theorem]{Corollary}
\newtheorem{problem}[theorem]{Problem}
\newtheorem{remark}[theorem]{Remark}
\newtheorem{conjecture}[theorem]{Conjecture}
\DeclareMathOperator{\inter}{int}
\DeclareMathOperator{\conv}{conv}
\def\A{\mathcal{A}}
\def\K{\mathcal{K}}
\def\D{\mathcal{D}}
\def\L{\mathcal{L}}
\def\R{\mathbb{R}}
\def\Z{\mathbb{Z}}
\def\N{\mathbb{N}}
\def\M{\mathrm{M}}
\DeclareMathOperator{\vol}{vol}
\DeclareMathOperator{\lin}{lin}
\DeclareMathOperator{\GL}{GL}
\DeclareMathOperator{\LG}{LG}
\DeclareMathOperator{\diam}{diam}
\DeclareMathOperator{\dist}{d}
\DeclareMathOperator{\flt}{Flt}
\DeclarePairedDelimiter{\card}{\lvert}{\rvert}
\begin{document}

\title[On densities of certain lattice arrangements]{On densities of lattice arrangements intersecting every $i$-dimensional affine subspace}

\author{Bernardo Gonz\'{a}lez Merino}
\address{Zentrum Mathematik, Technische Universit\"at M\"unchen, Boltzmannstr. 3, 85747 Garching bei M\"unchen, Germany}
\email{bg.merino@tum.de}

\author{Matthias Henze}
\address{Institut f\"ur Mathematik, Freie Universit\"at Berlin, Arnimallee 2, 14195 Berlin, Germany}
\email{matthias.henze@fu-berlin.de}

\thanks{Parts of this research were carried out during the authors' stay at the Mathematisches Forschungsinstitut Oberwolfach within the program "Research in Pairs" in 2014.
The first author was partially supported by Fundaci\'{o}n S\'{e}neca, Science and Technology Agency of the Regi\'{o}n de Murcia, through the
Programa de Formaci\'on Postdoctoral de Personal Investigador and
the Programme in Support of Excellence Groups of the Regi\'{o}n de Murcia, Spain, project reference 19901/GERM/15, and MINECO project reference MTM2015-63699-P, Spain.
The second author was supported by the Freie Universität Berlin within the Excellence Initiative of the
German Research Foundation.}



\begin{abstract}
In 1978, Makai Jr.~established a remarkable connection between the volume-product of a convex body, its maximal lattice packing density and the minimal density of a lattice arrangement of its polar body intersecting every affine hyperplane.
Consequently, he formulated a conjecture that can be seen as a dual analog of Minkowski's fundamental theorem, and which is strongly linked to the well-known Mahler-conjecture.

Based on the covering minima of Kannan \& Lov\'{a}sz and a problem posed by Fejes T\'{o}th, we arrange Makai Jr.'s conjecture into a wider context and investigate densities of lattice arrangements of convex bodies intersecting every $i$-dimensional affine subspace.
Then it becomes natural also to formulate and study a dual analog to Minkowski's second fundamental theorem.
As our main results, we derive meaningful asymptotic lower bounds for the densities of such arrangements, and furthermore, we solve the problems exactly for the special, yet important, class of unconditional convex bodies.
\end{abstract}

\maketitle

\section{Introduction}

A \emph{convex body} is a compact convex full-dimensional set in the Euclidean space $\R^n$.
We denote by~$\K^n$ the family of all convex bodies in $\R^n$, and we write $\K^n_o$ for the subfamily of \emph{$o$-symmetric} convex bodies $K\in\K^n$, that is, $K=-K$.
Let $\L^n$ denote the set of full-dimensional lattices in $\R^n$, that is, discrete subgroups of~$\R^n$ of full rank.
Every lattice $\Lambda\in\L^n$ can be written as $\Lambda=A\Z^n$ for some invertible matrix $A\in\GL_n(\R)$.
Let $\A_i(\R^n)$ be the family of $i$-dimensional affine subspaces of $\R^n$.
Given a subset $S\subseteq\R^n$ and a lattice $\Lambda\in\L^n$, we call $S+\Lambda=\bigcup_{z\in\Lambda}(S+z)$ a \emph{lattice arrangement}.
For more basic notation and background information on convex bodies and lattices, we refer to the textbooks by Gruber~\cite{gruber2007convex} and Martinet~\cite{martinet2003perfect}, respectively.

Our main interest in this paper concerns the so-called covering minima and their relation to the volume of a convex body.
Motivated by a number of applications, for instance, to flatness theorems and to inhomogeneous simultaneous diophantine approximation, these numbers have been formally introduced by Kannan \& Lov\'{a}sz~\cite{kannanlovasz1988covering}.
For a convex body $K\in\K^n$, a lattice $\Lambda\in\L^n$, and $i=1,\ldots,n$, the \emph{$i$-th covering minimum} (of $K$ with respect to~$\Lambda$) is defined as
\[\mu_i(K,\Lambda)=\inf\left\{\mu>0:\left(\mu K+\Lambda\right)\cap L\neq\emptyset\textrm{ for all }L\in\A_{n-i}(\R^n)\right\}.\]
Since $K$ is compact, this infimum is attained and thus can be replaced by a minimum.
These numbers extend the classical notion of the covering radius $\mu_n(K,\Lambda)=\min\{\mu>0:\mu K+\Lambda=\R^n\}$, which is also known as the inhomogeneous minimum (see~\cite[\S 13]{gruberlekker1987geometry}).
The concept of lattice arrangements that intersect every $(n\!-\!i)$-dimensional affine subspace has been studied already before the work of Kannan \& Lov\'{a}sz.
In fact, generalizing the sphere covering problem, Fejes T\'{o}th~\cite{toth1976research} proposed to determine the density of the thinnest lattice arrangement of solid spheres with this property (more on this in \cref{sectDiscussion}).

A basic result on \emph{lattice coverings}, meaning lattice arrangements covering the whole space, is that their density is at least one.
More precisely, for any $K\in\K^n$ and $\Lambda\in\L^n$, the \emph{density} of $K+\Lambda$ is defined as
\[\delta(K,\Lambda)=\frac{\vol(K)}{\det(\Lambda)},\]
where $\vol(K)$ denotes the volume (Lebesgue-measure) of~$K$ and $\det(\Lambda)=\card{\det(A)}$ the determinant of the lattice~$\Lambda=A\Z^n$.
Now, if $K+\Lambda=\R^n$, then $\delta(K,\Lambda)\geq1$, which can be equivalently formulated in the language of the covering radius as (cf.~\cite[\S 13.5]{gruberlekker1987geometry})
\begin{align}
\mu_n(K,\Lambda)^n\vol(K)&\geq\det(\Lambda).\label{eqn_mu_n}
\end{align}
The corresponding result for \emph{lattice packings}, that is, lattice arrangements $K+\Lambda$ with the property that $(\inter(K)+x)\cap(\inter(K)+y)=\emptyset$, for every $x,y\in\Lambda$, $x\neq y$, is the intuitively clear statement that their density is $\delta(K,\Lambda)\leq1$.
In order to give an equivalent formulation that corresponds to~\eqref{eqn_mu_n}, we introduce Minkowski's \emph{successive minima}, which are defined for any $o$-symmetric $K\in\K^n_o$ and any $\Lambda\in\L^n$ as
\[\lambda_i(K,\Lambda)=\min\{\lambda\geq0:\dim(\lambda K\cap\Lambda)\geq i\},\quad\text{for }i=1,\ldots,n.\]
Here, $\dim(S)$ denotes the dimension of the affine hull of the set $S\subseteq\R^n$.
For general $K\in\K^n$, one usually extends this definition by setting $\lambda_i(K,\Lambda):=\lambda_i(\frac12\D K,\Lambda)$, where $\D K=K-K$ is the \emph{difference body} of~$K$.
Based on the fact that $K+\Lambda$ is a lattice packing if and only if $\lambda_1(K,\Lambda)\geq2$ (cf.~\cite[Sect.~30]{gruber2007convex}), one can reformulate the density statement for lattice packings as
\begin{align}
\lambda_1(K,\Lambda)^n\vol(K)&\leq2^n\det(\Lambda).\label{eqnMinkowskiFirst}
\end{align}
In the case that $K$ is $o$-symmetric, this inequality is known as Minkowski's first fundamental theorem (cf.~\cite[Sect.~22]{gruber2007convex} and~\cite[\S 30]{minkowski1896geometrie}).
Minkowski strengthened his fundamental theorem by taking the whole sequence of successive minima into account.
He formulated his result for $o$-symmetric convex bodies, but analogously to~\eqref{eqnMinkowskiFirst} it naturally extends to arbitrary $K\in\K^n$ and $\Lambda\in\L^n$, and reads as follows (cf.~\cite[Sect.~23]{gruber2007convex} and~\cite{malikiosis2010adiscrete}):
\begin{align}
\lambda_1(K,\Lambda)\cdot\ldots\cdot\lambda_n(K,\Lambda)\vol(K)&\leq2^n\det(\Lambda).\label{eqnMinkowskiSecond}
\end{align}

The main motivation for our studies is a conjecture of Makai Jr.~which is an analog of~\eqref{eqn_mu_n} for the first covering minimum and at the same time a polar version of Minkowski's theorem~\eqref{eqnMinkowskiFirst}.

\begin{conjecture}[Makai Jr.~\cite{makai1978on}]\label{conjMakai}
Let $K\in\K^n$ and let $\Lambda\in\L^n$.
Then,
\begin{align}
\mu_1(K,\Lambda)^n\vol(K)&\geq\frac{n+1}{2^nn!}\det(\Lambda),\label{eqnMakaiConjGeneral}
\end{align}
and equality can only hold if $K$ is a simplex.

Moreover, if $K$ is $o$-symmetric, then
\begin{align}
\mu_1(K,\Lambda)^n\vol(K)&\geq\frac1{n!}\det(\Lambda),\label{eqnMakaiConjSymmetric}
\end{align}
and equality can only hold if $K$ is a crosspolytope.
\end{conjecture}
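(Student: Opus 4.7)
The plan is to pass to the polar body and reduce the conjecture to a form of Mahler's volume-product inequality, following the original insight of Makai~Jr.

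In the $o$-symmetric case, the first step is to establish the identity
\[
\mu_1(K,\Lambda)=\frac{1}{2\lambda_1(K^*,\Lambda^*)},
\]
where $K^*$ is the polar body and $\Lambda^*$ the dual lattice. Indeed, a one-dimensional projection argument shows that the covering condition along a primitive direction $v\in\Lambda^*\setminus\{0\}$ becomes $2\mu\,h_K(v)\geq 1$; taking the infimum over $v$ and using $h_K(v)=\|v\|_{K^*}$ yields the identity. Substituting into~\eqref{eqnMakaiConjSymmetric} and then optimizing the resulting inequality over the lattice scale, one checks that the symmetric case of the conjecture is equivalent to
\[
\vol(K)\vol(K^*)\geq\frac{4^n\,\delta^*(K^*)}{n!},
\]
where $\delta^*(K^*)$ denotes the maximal lattice packing density of~$K^*$. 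Since $\delta^*(K^*)\leq 1$, this is implied by Mahler's conjecture $\vol(K)\vol(K^*)\geq 4^n/n!$; equality for the crosspolytope corresponds to the fact that its polar, the cube, tiles space and saturates Mahler.

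For the non-symmetric case, I would analogously reformulate $\mu_1(K,\Lambda)$ through the polar of $K$ about a suitable interior point (Santal\'o point or centroid) and connect~\eqref{eqnMakaiConjGeneral} to a non-symmetric volume-product inequality whose conjectured extremal constant $(n+1)^{n+1}/(n!)^2$ is attained by simplices. The main obstacle is that Mahler's conjecture itself is open for $n\geq 4$, so a complete proof along these lines is out of reach. I would therefore focus on the settings where Mahler is known: unconditional bodies (Saint-Raymond) and zonoids (Reisner) in every dimension, and general convex bodies in dimension three; in each of these one still has to compute or estimate $\delta^*(K^*)$ to derive the sharp constant in~\eqref{eqnMakaiConjSymmetric} and identify the equality cases. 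For arbitrary convex bodies, replacing Mahler by the Bourgain-Milman inequality should yield asymptotic lower bounds of the conjectured form $c^n\det(\Lambda)/n!$ on the product $\mu_1(K,\Lambda)^n\vol(K)$.
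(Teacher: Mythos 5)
The statement you are asked about is stated in the paper as an open \emph{conjecture} (Makai Jr.'s conjecture); the paper does not prove it, and neither does your proposal. Your reduction is nevertheless correct and is exactly the route the paper itself surveys in Section~2: the duality $\lambda_1(K,\Lambda)\,\mu_1(K^\star,\Lambda^\star)=\tfrac12$ (the paper's identity \eqref{eqnDualitySuc1Cov1}), combined with optimization over lattices, turns \eqref{eqnMakaiConjSymmetric} into Makai's identity $\M(K)=4^n\delta(K)\theta_1(K^\star)$ and hence into the inequality $\vol(K)\vol(K^\star)\geq 4^n\delta(K^\star)/n!$, which would follow from Mahler's conjecture since $\delta\leq 1$. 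The partial cases you propose to salvage (dimension two, unconditional bodies, zonoids/zonotopes, and the asymptotic bound via Bourgain--Milman/Kuperberg) are precisely the known results the paper cites, so as a survey of the state of the art your proposal is accurate. The genuine gap is simply that you have replaced one open conjecture by another (strictly stronger, given $\delta\le1$) open conjecture: Mahler's inequality is unproven for general $o$-symmetric bodies in dimensions $n\ge 4$ (and the non-symmetric version likewise), so no complete proof of \eqref{eqnMakaiConjGeneral} or \eqref{eqnMakaiConjSymmetric} results, as you yourself acknowledge. Note also that even where Mahler is known, this route gives only sufficiency, not the equality characterization claimed in the conjecture, unless one additionally controls $\delta(K^\star)$.
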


Makai Jr.~showed that for the standard lattice $\Lambda=\Z^n$, equality in~\eqref{eqnMakaiConjGeneral} holds for the simplex $T_n=\conv\{e_1,\ldots,e_n,-{\bf 1}\}$ and in~\eqref{eqnMakaiConjSymmetric} for the crosspolytope $C_n^\star=\conv\{\pm e_1,\ldots,\pm e_n\}$, where $e_i$ is the $i$-th coordinate unit vector and ${\bf 1}=(1,\ldots,1)^\intercal$ is the \emph{all-one vector}.

Note that Makai Jr.~did not state his conjecture in terms of $\mu_1(K,\Lambda)$ but rather in terms of what he calls \emph{non-separable arrangements}, which are lattice arrangements that intersect every affine hyperplane.
In order to explain why Makai Jr.'s conjecture would be a polar Minkowski theorem, we make use of an identity of Kannan \& Lov\'{a}sz~\cite[Lem.~(2.3)]{kannan1992lattice} that says that the first successive minimum is strongly dual to the first covering minimum.
More precisely, for any $K\in\K^n_o$ and $\Lambda\in\L^n$, we have
\begin{align}
\lambda_1(K,\Lambda)\mu_1(K^\star,\Lambda^\star)&=\frac12,\label{eqnDualitySuc1Cov1}
\end{align}
where $K^\star=\{x\in\R^n:x^\intercal y\leq 1,\text{ for all }y\in K\}$ is the \emph{polar body} of~$K$, and $\Lambda^\star=\{x\in\R^n:x^\intercal y\in\Z,\text{ for all }y\in\Lambda\}$ is the \emph{polar lattice} of~$\Lambda$.
By the definition of the first successive minimum, we have for any $K\in\K^n_o$,
\[\lambda_1(K,\Lambda)\geq1\quad\text{ if and only if }\quad\inter(K)\cap\Lambda=\{0\}.\]
Therefore, based on~\eqref{eqnDualitySuc1Cov1} we see that under the condition $\inter(K)\cap\Lambda=\{0\}$, Minkowski's theorem~\eqref{eqnMinkowskiFirst} states that $\vol(K)\leq2^n\det(\Lambda)$, whereas the $o$-symmetric part of \cref{conjMakai} claims that
\begin{align}
\vol(K^\star)&\geq\frac{2^n}{n!}\det(\Lambda^\star),\quad\text{for any }K\in\K^n_o\text{ with }\inter(K)\cap\Lambda=\{0\}.\label{eqnPolarMinkowski}
\end{align}

Yet another interpretation of Makai Jr.'s conjecture can be given in terms of the \emph{lattice width} $\omega_\Lambda(K)=\min_{v\in\Lambda^\star\setminus\{0\}}\omega(K,v)$ of $K$ with respect to $\Lambda$, where $\omega(K,v)=\max_{x\in K}x^\intercal v-\min_{x\in K}x^\intercal v$ is the width of $K$ in direction~$v$.
The identity~\eqref{eqnDualitySuc1Cov1} shows that $\mu_1(K,\Lambda)$ is reciprocal to $\omega_\Lambda(K)$ and hence~\eqref{eqnMakaiConjSymmetric} can be seen as a discrete analog to the obvious inequality $\vol(K)\geq\frac{\kappa_n}{2^n}\omega(K)^n$, for $K\in\K^n_o$, where $\omega(K)=\min_{u\in S^{n-1}}\omega(K,u)$ is the usual width of $K$ and $\kappa_n=\pi^{n/2}/\Gamma(n/2+1)$ is the volume of the Euclidean unit ball~$B_n$.

Rather than attacking \cref{conjMakai} directly, our main objective is to embed Makai Jr.'s problem into a wider context that strengthens the analogy to the covering inequality~\eqref{eqn_mu_n} and the duality to Minkowski's classical results.
In the spirit of Fejes T\'{o}th~\cite{toth1976research}, we are interested in minimal densities of lattice arrangements with the more refined covering property captured by the $i$-th covering minimum.
In particular, we investigate the following problem.

\begin{problem}\label{probMainProblem}
Find optimal constants $c_{1,n},\ldots,c_{n,n}>0$ and $c_n>0$ that depend only on their indices, such that for any $K\in\K^n$ and $\Lambda\in\L^n$, one has
\begin{align}
\mu_i(K,\Lambda)^n\vol(K)&\geq c_{i,n}\det(\Lambda),\label{eqnMainProblem1}
\end{align}
for $i=1,\ldots,n$, and
\begin{align}
\mu_1(K,\Lambda)\cdot\ldots\cdot\mu_n(K,\Lambda)\vol(K)&\geq c_n\det(\Lambda).\label{eqnMainProblem2}
\end{align}
\end{problem}

Observe that the inequalities in \cref{probMainProblem} are invariant under simultaneous transformations of $K$ and $\Lambda$ by an invertible linear mapping.
Therefore, we usually restrict our attention to the standard lattice $\Lambda=\Z^n$ without loss of generality.
The question~\eqref{eqnMainProblem2} involving the whole sequence of covering minima was already posed by Betke, Henk \& Wills~\cite{betkehenkwills1993successive} and in the case $n=2$ answered by Schnell~\cite{schnell1995a} (cf.~\cref{thmSchnell}).
This inequality can be seen as a dual inequality to Minkowski's second fundamental theorem~\eqref{eqnMinkowskiSecond}.
For convenience we call $\mu_1(K,\Lambda)\cdot\ldots\cdot\mu_n(K,\Lambda)\vol(K)/\det(\Lambda)$ the \emph{covering product} of $K$ (with respect to~$\Lambda$).

Our contributions to \cref{probMainProblem} focus on the one hand on determining meaningful first bounds on the optimal constants $c_{i,n}$ and $c_n$, and on the other hand, on solving it for a particular family of convex bodies.
To be more precise, in \cref{prop_first_bounds_mu_j} we obtain lower bounds of the type~\eqref{eqnMainProblem1} that support the natural guess that $c_{i,n}$ is in order much bigger than $c_{j,n}$ for any $i>j$.
Regarding the covering product, we prove in \cref{thm_mu_prod_weak_bound} that $c_n\geq1/n!$, which is a necessary condition for \eqref{eqnMakaiConjSymmetric}, since the covering minima form a non-decreasing sequence, that is, $\mu_1(K,\Lambda)\leq\ldots\leq\mu_n(K,\Lambda)$.
For the family of \emph{standard unconditional} convex bodies, which are convex bodies that are symmetric with respect to every coordinate hyperplane, we derive the best possible bounds in~\eqref{eqnMainProblem1} and~\eqref{eqnMainProblem2}, and characterize the extremal convex bodies (see \cref{prop_mu_j_vol_uncondis_standard} and \cref{thm_mu_prod_vol_uncond}).
Finally, we argue in \cref{sectCoveringProduct} that for general convex bodies the optimal constant in~\eqref{eqnMainProblem2} is most likely given by $c_n=(n+1)/2^n$, and we exhibit a concrete example that has exponentially smaller covering product than any standard unconditional body.
All these results show that, unlike for Minkowski's inequalities~\eqref{eqnMinkowskiFirst} and~\eqref{eqnMinkowskiSecond}, the problems~\eqref{eqnMainProblem1} and~\eqref{eqnMainProblem2} are independent from each other.

Before we discuss the details of the aforementioned findings, we survey known results and various connections of Makai Jr.'s conjecture to some notoriously difficult problems in convex and discrete geometry.
Moreover, we illustrate the applicability of the polar Minkowski inequality by deriving a variant of a linear form theorem from a known case of \cref{conjMakai}.


\section{A review of the literature around Makai Jr.'s conjecture and an application to linear forms}
\label{sectDiscussion}

For the ease of presentation, we mostly restrict the discussion to the case of $o$-symmetric convex bodies in this section.
There are analogous ``non-symmetric'' versions of the relations elaborated on below, which can easily be found in the cited literature.
Specifically, \'{A}lvarez Paiva, Balacheff \& Tzanev~\cite[Sect.~3]{alvarezpaivabalachefftzanev2013isosystolic} provide detailed information for the general case.

There is a strong connection of Makai Jr.'s conjecture to a well-known problem of Mahler on the volume-product.
Still an unsolved problem today, Mahler conjectured in 1939 that for $o$-symmetric convex bodies $K\in\K^n_o$ the \emph{volume-product} $\M(K)=\vol(K)\vol(K^\star)$ is minimized by the cube $C_n=[-1,1]^n$.
In symbols,
\begin{align}
\M(K)\geq\M(C_n)=\frac{4^n}{n!}.\label{eqnMahlerConjecture}
\end{align}
We refer the reader to~\cite{boroczkymakaimeyerreisner2013onthe} for a historical account of the problem, an overview of the state of the art, and references to the original literature on partial results concerning~\eqref{eqnMahlerConjecture} that we mention below.

Now, Makai Jr.~\cite[Thm.~1]{makai1978on} proved the remarkable identity
\begin{align}
\M(K)&=4^{n}\delta(K)\theta_1(K^\star),\quad\text{for}\quad K\in\K^n_o,\label{eqnMakaiMahlerIdentity}
\end{align}
where $\delta(K)=\max\{\delta(K,\Lambda):\Lambda\in\L^n, K+\Lambda\text{ a packing}\}$ denotes the maximum density of a lattice packing of~$K$, and $\theta_i(K)=\min_{\Lambda\in\L^n}\delta(\mu_i(K,\Lambda)K,\Lambda)$ the minimum density of a lattice arrangement of~$K$ that intersects every $(n\!-\!i)$-dimensional affine subspace.
This relation shows that Mahler's conjecture~\eqref{eqnMahlerConjecture} is equivalent to $\delta(K)\theta_1(K^\star)\geq1/n!$, a statement on densities of lattice arrangements.

In view of $\delta(K)\leq1$, for $K\in\K^n_o$, we see that Makai Jr.'s conjecture~\eqref{eqnMakaiConjSymmetric}, which reformulates as $\theta_1(K)\geq1/n!$, is a necessary condition for Mahler's conjecture.
In particular, partial results for the latter problem transfer to the former.
For example, \cref{conjMakai} holds for $n=2$ (cf.~\cite{fejestothmakai1974plates,makai1978on}), and its $o$-symmetric version~\eqref{eqnMakaiConjSymmetric} holds on the family of unconditional convex bodies, ellipsoids, and zonotopes.
Notice that this means that the polar Minkowski inequality~\eqref{eqnPolarMinkowski} holds for the polar bodies of these special classes.

While the exact conjectured lower bound in~\eqref{eqnMahlerConjecture} remains elusive, the asymptotic growth rate of the dimensional constant is well understood.
The strongest result is due to Kuperberg \cite{kuperberg2008from}, who showed that $\M(K)\geq\pi^n/n!$, for any $K\in\K^n_o$.
As a consequence one obtains the following asymptotic estimates in Makai Jr.'s problem.
For any $K\in\K^n$ holds
\begin{align}
\theta_1(K)&\geq\left(\frac{\pi}{8}\right)^n\frac{n+1}{2^nn!},\qquad\text{and}\qquad\theta_1(K)\geq\left(\frac{\pi}{4}\right)^n\frac{1}{n!},\quad\text{if }K\in\K^n_o.\label{eqnAsymptoticMakai}
\end{align}
The first of these inequalities appears in \'{A}lvarez Paiva et al.~\cite[Thm.~II]{alvarezpaivabalachefftzanev2013isosystolic}.
Note also that already Mahler~\cite{mahler1974polar} studied asymptotic estimates of this kind.

Conversely, it turns out that an affirmative answer to Makai Jr.'s conjecture implies good asymptotic results for Mahler's problem, so that the two conjectures~\eqref{eqnMakaiConjSymmetric} and~\eqref{eqnMahlerConjecture} are asymptotically equivalent.
More precisely, one can use the famous \emph{Minkowski-Hlawka theorem} (cf.~\cite[p.~202]{gruberlekker1987geometry}), which is a reverse statement of Minkowski's fundamental theorem~\eqref{eqnMinkowskiFirst}, and obtain the bound $\M(K)\geq2^n/n!$, under the assumption $\theta_1(K)\geq1/n!$.
Details on this relation have been discussed in~\cite{alvarezpaivabalachefftzanev2013isosystolic}.

Finally, we survey the very limited knowledge on the densities $\theta_i(K)$ for particular convex bodies~$K$.
The original problem of Fejes T\'{o}th~\cite{toth1976research} concerns the densities $\theta_i(B_n)$.
Since the volume-product $\M(B_n)$ is known explicitly, Makai Jr.'s identity~\eqref{eqnMakaiMahlerIdentity} shows that determining $\theta_1(B_n)$ is equivalent to determining $\delta(B_n)$.
This is the lattice sphere packing problem which is solved in dimension $n\leq8$ and $n=24$ (see~\cite{fejestoth1983new} and~\cite[Sect.~29]{gruber2007convex}).
On the other hand, the lattice sphere covering problem is exactly the question on $\theta_n(B_n)$, being solved for $n\leq5$ (see~\cite{fejestoth1983new}).
The only known value of $\theta_i(B_n)$ for $i\notin\{1,n\}$ is $\theta_2(B_3)$ due to Bambah \& Woods~\cite{bambahwoods1994on}; see \cref{tblBallDensities}.

\begin{table}[ht]
\begin{tabular}{|>{$}c<{$}|*{3}{>{$}c<{$}|}}
\hline n=2 & \multicolumn{3}{c|}{\begin{tabular}[t]{c|c} $\theta_1(B_2)=\frac{\sqrt{3}\pi}{8}$\hspace{15pt} & \hspace{15pt}$\theta_2(B_2)=\frac{2\pi}{\sqrt{27}}$ \end{tabular}} \\\hline
n=3 & \,\theta_1(B_3)=\frac{\sqrt{2}\pi}{12}\, & \,\theta_2(B_3)=\frac{9\pi}{32}\, & \,\theta_3(B_3)=\frac{5\sqrt{5}\pi}{24}\, \\\hline
\end{tabular}
\caption{Densities of lattice arrangements of $B_n$ in small dimensions.}
 \label{tblBallDensities}
\end{table}
\noindent Since the cube admits a lattice packing that covers the whole space, we have $\delta(C_n)=\theta_n(C_n)=1$ and hence via~\eqref{eqnMakaiMahlerIdentity} and $\M(C_n)=4^n/n!$, we find $\theta_1(C_n^\star)=1/n!$.
Exchanging the roles of $C_n$ and $C_n^\star$ leads to a difficult problem for which only recently the first non-trivial results were proven.
In~\cite{fejestothfodorvigh2015packing} it was shown that there is an absolute constant $c>0$ such that $\delta(C_n^\star)\leq c\cdot 0.8685^n$, and hence $\theta_1(C_n)\geq c \cdot 1.1514^n/n!$.
We think that the probably most managable problem on these densities is the following:
\[\text{Is it true that }\theta_{n-1}(C_n)=1/2,\text{ for any }n\geq2\,?\]
This conjectured value would be realized by the checkerboard lattice $\Lambda_o=\{x\in\Z^n:x_1+\ldots+x_n \equiv 0 \text{ mod }2\}$ that also appears at the end of \cref{sectMuiVolBounds}.

\subsection*{An application to linear forms}

Minkowski successfully applied his fundamental theorem~\eqref{eqnMinkowskiFirst} to questions concerning solutions of inequalities involving linear forms; a benchmark example is his ``linear form theorem'' (cf.~\cite[Cor.~22.2]{gruber2007convex}).
It works best in situations where the volume of the underlying convex body that describes the problem at hand, can be explicitly computed.
It should come as no surprise that an affirmative answer to Makai Jr.'s conjecture~\eqref{eqnMakaiConjSymmetric} would be equally useful to solve questions in which the volume of the polar body can be controlled.
An illustrating example is the following.

\begin{theorem}
Consider $n$ linear homogeneous forms $\ell_i(x)=a_i^\intercal x$, for some $a_1,\ldots,a_n\in\R^n$ of determinant $\det(A)=\det(a_1,\ldots,a_n)\neq0$.
Then, there exists a non-zero integral vector $x\in\Z^n\setminus\{0\}$ such that
\[\sum_{i=1}^n\card{\ell_i(x)}+\card*{\sum_{i=1}^n\ell_i(x)}\leq\left((n+1)!\cdot\card{\det(A)}\right)^{\frac{1}{n}}.\]
For small dimensions one can provide sharp bounds:

If $n=2$, there exists a non-zero integral vector $x\in\Z^n\setminus\{0\}$ such that
\[\card{\ell_1(x)}+\card{\ell_2(x)}+\card*{\ell_1(x)+\ell_2(x)}\leq\frac{4}{\sqrt{3}}\card{\det(A)}^{\frac12}.\]
The forms $\ell_1(x)=x_1-2x_2$, $\ell_2(x)=x_1+x_2$ show that the constant on the right hand side cannot be improved.

If $n=3$, there exists a non-zero integral vector $x\in\Z^n\setminus\{0\}$ such that
\[\card{\ell_1(x)}+\card{\ell_2(x)}+\card{\ell_3(x)}+\card*{\ell_1(x)+\ell_2(x)+\ell_3(x)}\leq6\left(\frac{2}{7}\right)^{\frac23}\card{\det(A)}^{\frac13}.\]
The forms $\ell_1(x)=3x_1+3x_2-4x_3$, $\ell_2(x)=3x_1-4x_2+3x_3$, and $\ell_3(x)=-4x_1+3x_2+3x_3$ show the minimality of the constant.
\end{theorem}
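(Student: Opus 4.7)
The plan is to reinterpret the functional $x\mapsto \sum_{i=1}^n|\ell_i(x)| + |\sum_{i=1}^n\ell_i(x)|$ as the Minkowski functional $\|Ax\|_{B_n}$ of the $o$-symmetric convex body
\[
B_n \;=\; \{y\in\R^n : \|y\|_1 + |{\bf 1}^\intercal y| \le 1\} \;\in\; \K^n_o,
\]
where $A$ denotes the matrix with rows $a_1^\intercal,\ldots,a_n^\intercal$.  The minimum of this functional over $x\in\Z^n\setminus\{0\}$ is precisely $\lambda_1(B_n,A\Z^n)$, so the task reduces to an upper bound on this first successive minimum.  The crucial structural fact is that the polar decomposes as a Minkowski sum: since $B_n$ is the unit ball of the sum of two norms with polar bodies $C_n=[-1,1]^n$ and the diagonal segment $[-{\bf 1},{\bf 1}]$, adding their support functions yields $B_n^\star = C_n + [-{\bf 1},{\bf 1}]$.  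This is a zonotope generated by the $n+1$ vectors $e_1,\ldots,e_n,{\bf 1}$, and since every one of the $n+1$ maximal minors of the generator matrix has absolute value $1$, Shephard's formula gives $\vol(B_n^\star) = (n+1)\,2^n$.

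For the general inequality I would then invoke the $o$-symmetric Makai Jr.\ conjecture~\eqref{eqnMakaiConjSymmetric} applied to $B_n^\star$.  Although open in full generality, this conjecture is known to hold on the class of zonotopes (see \cref{sectDiscussion}) and hence applies to $B_n^\star$.  Combining with the duality identity~\eqref{eqnDualitySuc1Cov1}, applied to the polar lattice $(A\Z^n)^\star$ of determinant $|\det(A)|^{-1}$, the inequality rearranges into
\[
\lambda_1(B_n, A\Z^n)^n \;\le\; \frac{n!\,\vol(B_n^\star)}{2^n}\,|\det(A)| \;=\; (n+1)!\,|\det(A)|,
\]
and choosing $x$ attaining $\lambda_1$ yields the general statement.

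For the sharp constants in dimensions $n=2$ and $n=3$ I would refine by working directly with $B_n$ via the packing-density strengthening of Minkowski's first theorem, $\lambda_1(K,\Lambda)^n\vol(K) \le 2^n \delta(K)\det(\Lambda)$ for $K\in\K^n_o$.  Splitting $B_n$ along the hyperplane ${\bf 1}^\intercal y=0$ and then by sign-orthants yields $\vol(B_2)=3/4$ and $\vol(B_3)=5/12$ by elementary integration.  For $n=2$, $B_2$ is the centrally symmetric hexagon with vertices $\pm e_i/2$ and $\pm(e_1-e_2)/2$; as a parallelohedron it tiles $\R^2$, so $\delta(B_2)=1$, and the refined inequality produces exactly $4/\sqrt{3}$.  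For $n=3$, the same inequality with $\delta(B_3)=45/49$ produces $6(2/7)^{2/3}$.  Direct evaluation of the functional on the extremal forms listed in the statement attains equality in each small dimension, confirming their minimality.

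The principal obstacle is thus the exact value $\delta(B_3) = 45/49$, equivalently the critical determinant $\Delta(B_3) = 49/864$.  The lower bound $\delta(B_3)\ge 45/49$ is furnished by the lattice attached to the stated extremal example.  The matching upper bound requires showing that no lattice $\Lambda$ with $\inter(B_3)\cap\Lambda=\{0\}$ can have determinant strictly below $49/864$.  Establishing this demands a careful analysis of the $14$-faceted polytope $B_3$, whose facets are $y_i=\pm\tfrac12$, $y_i+y_j=\pm\tfrac12$ and ${\bf 1}^\intercal y=\pm\tfrac12$, and of its admissible lattices, in the spirit of the classical critical-determinant computations of Minkowski and Mordell.
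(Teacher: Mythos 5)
Your proposal follows essentially the same route as the paper: your $B_n$ is exactly the polar $Z_n^\star$ of the zonotope $Z_n=[-1,1]^n+[-{\bf 1},{\bf 1}]$ that the paper works with, your combination of~\eqref{eqnMakaiConjSymmetric} (valid for zonotopes) with the duality~\eqref{eqnDualitySuc1Cov1} is precisely the polar Minkowski formulation~\eqref{eqnPolarMinkowski} invoked there, and the sharp small-dimensional constants are obtained in both cases from $\lambda_1(Z_n^\star,\Z^n)^n\vol(Z_n^\star)\le2^n\delta(Z_n^\star)$ together with $\vol(Z_2^\star)=3/4$, $\delta(Z_2^\star)=1$, $\vol(Z_3^\star)=5/12$, $\delta(Z_3^\star)=45/49$. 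The step you flag as the principal obstacle --- the upper bound $\delta(B_3)\le 45/49$ --- is not re-derived in the paper either: it is quoted from Betke \& Henk's computation of densest lattice packings of $3$-polytopes applied to the cuboctahedron $Z_3^\star$, so no fresh critical-determinant analysis is required. One small caution: evaluating the functional at a few integer points of the listed extremal forms only shows that \emph{some} point attains the bound, not that none does better; the minimality of the constants is instead read off from the fact that the associated lattices realize $\delta(Z_n^\star)$, which forces $\lambda_1(B_n,A\Z^n)$ to equal the stated value exactly.
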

\begin{proof}
Consider the zonotope $Z_n=[-1,1]^n+[-{\bf 1},{\bf 1}]$.
This is an $o$-symmetric convex body whose volume can be computed by dissecting $Z_n$ into $n+1$ parallelepipeds, and which is given by $\vol(Z_n)=(n+1)2^n$ (see~\cite[Ch.~9]{beckrobins2015computing2nd}).
Denoting the \emph{norm function} of $K\in\K^n_o$ by $\|y\|_K=\min\{\lambda\geq0:y\in \lambda K\}$, we have
\[\|y\|_{Z_n^\star}=h_{Z_n}(y)=\sum_{i=1}^n\card{y_i}+\card{y_1+\ldots+y_n},\quad\text{for any}\quad y\in\R^n,\]
where $h_{Z_n}(x)=\max_{y\in Z_n}x^\intercal y$, $x\in\R^n$, is the \emph{support function} of~$Z_n$ (see~\cite{gruber2007convex}).
Therefore, for any $\tau\geq0$, we have
\[\left\{x\in\R^n:\sum_{i=1}^n\card{\ell_i(x)}+\card*{\sum_{i=1}^n\ell_i(x)}\leq\tau\right\}=\tau A^{-1} Z_n^\star.\]
By the formulation of~\eqref{eqnMakaiConjSymmetric} as a polar Minkowski theorem~\eqref{eqnPolarMinkowski} and its validity for polars of zonotopes, we get that $\tau A^{-1} Z_n^\star$ contains a non-zero integral vector $x\in\Z^n$, if
\[\frac{2^n}{n!}\geq\vol\left((\tau A^{-1} Z_n^\star)^\star\right)=\frac{\card{\det(A)}}{\tau^n}\vol(Z_n)=\frac{\card{\det(A)}}{\tau^n}(n+1)2^n.\]
This holds if and only if $\tau^n\geq(n+1)!\cdot\card{\det(A)}$, implying the claim for arbitrary~$n$.

For $n\in\{2,3\}$, the density of a densest lattice packing of $Z_n^\star$ is known, and moreover, we can compute the volume of $Z_n^\star$ explicitly.
These facts enable us to use Minkowski's fundamental theorem instead of~\eqref{eqnPolarMinkowski} and they lead to the sharp bounds stated in the theorem.
By definition of the density $\delta(Z_n^\star)$ one can introduce it as a parameter in~\eqref{eqnMinkowskiFirst} and obtain that $\lambda_1(Z_n^\star,\Z^n)^n\vol(Z_n^\star)\leq2^n\delta(Z_n^\star)$ (cf.~\cite[\S~20.1]{gruberlekker1987geometry}).
Since the density $\delta(Z_n^\star)$ is invariant under invertible linear transformations, this inequality guarantees the existence of a non-zero integral vector in $\tau A^{-1}Z_n^\star$ as long as
\begin{align}
\vol(\tau A^{-1}Z_n^\star)&=\frac{\tau^n}{\card{\det(A)}}\vol(Z_n^\star)\geq\delta(Z_n^\star)2^n.\label{eqnLFsmallDim}
\end{align}
Now, $Z_2^\star$ is a hexagon of volume $\vol(Z_2^\star)=3/4$ which tiles the plane by translations of the lattice $\frac12\Bigg(\begin{array}{cc}1&-2\\ 1&1\end{array}\Bigg)\Z^2$.
Hence, $\delta(Z_2^\star)=1$, and~\eqref{eqnLFsmallDim} gives the condition $\tau^2\geq(16/3)\card{\det(A)}$, from which the claimed bound follows, for $n=2$.
The extremal example can be read off from the lattice that realizes~$\delta(Z_2^\star)$.

In the case $n=3$, we find that $Z_3^\star$ is the cubeoctahedron, which is an Archimedean polytope with $12$ vertices and volume $\vol(Z_3^\star)=5/12$.
A densest lattice packing of $Z_3^\star$ has been computed by Betke \& Henk~\cite[Sect.~5]{betkehenk2000densest}.
After a linear transformation respecting our coordinates of the cubeoctahedron, their result shows that $\delta(Z_3^\star)=45/49$, and this density is realized by the lattice with basis $(1/6)\cdot\left\{(3,3,-4)^\intercal,(3,-4,3)^\intercal,(-4,3,3)^\intercal\right\}$.
Analogously to the case $n=2$, we use this information together with~\eqref{eqnLFsmallDim} in order to obtain the desired bound and an extremal example.
\end{proof}

\section{Results concerning inequalities of the form~\texorpdfstring{\eqref{eqnMainProblem1}}{(\ref{eqnMainProblem1})}}
\label{sectMuiVolBounds}

For a lattice $\Lambda\in\L^n$ we denote by $\L_i(\Lambda)$ the family of $i$-dimensional \emph{lattice planes} of $\Lambda$, that is, linear subspaces of $\R^n$ that are spanned by vectors of~$\Lambda$.
Kannan \& Lov\'{a}sz~\cite[Rem.~1]{kannanlovasz1988covering} observed that the $i$-th covering minimum admits an equivalent description via projections onto $i$-dimensional lattice planes.
Denoting by $S|L$ the orthogonal projection of $S\subseteq\R^n$ onto a linear subspace $L$, they prove that
\begin{align}
\mu_i(K,\Lambda)&=\max\left\{\mu_i(K|L,\Lambda|L):L\in\L_i(\Lambda)\right\}.\label{eqn_KL_max_mu}
\end{align}
Note that $\Lambda|L$ is a lattice in the subspace $L\in\L_i(\Lambda)$, so that it makes sense to compute the covering radius of $K|L$ with respect to this lattice.

As explained in the introduction, there is no loss of generality to restrict the consideration to the standard lattice~$\Z^n$ in \cref{probMainProblem}.
For the sake of brevity, we write $\mu_i(K)=\mu_i(K,\Z^n)$ in this case, for any $K\in\K^n$ and any $i\in[n]:=\{1,\ldots,n\}$.
We start our discussion with first general bounds of the form~\eqref{eqnMainProblem1}.

\begin{theorem}\label{prop_first_bounds_mu_j}\
\begin{enumerate}[i)]
 \item Let $K\in\K^n_o$. There exists a constant $\flt(n)$ only depending on~$n$ such that, for any $i\in[n]$,
  \[\mu_i(K)^n\vol(K)\geq\frac{i!}{n!}\flt(n)^{-(n-i)}.\]
  In particular, we can choose $\flt(n)=c\,n(1+\log{n})$, for some $c>0$.
 \item Let $K\in\K^n$ be such that $\mu_i(K)K+\Z^n$ contains every $i$-dimensional coordinate hyperplane. Then
\[\mu_i(K)^n\vol(K)\geq\frac{i!^{\frac{n}{i}}}{n!}.\]
\end{enumerate}
\end{theorem}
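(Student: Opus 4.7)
Both parts rest on the same skeleton: the projection identity~\eqref{eqn_KL_max_mu} reduces $\mu_i(K)$ to an $i$-dimensional covering radius of a projection of~$K$, after which the missing $(n-i)$ factors need to be recovered by working in the complementary direction.

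For part~(i), I would begin by selecting $L_0 \in \L_i(\Z^n)$ attaining the maximum in~\eqref{eqn_KL_max_mu}, so that $\mu_i(K) = \mu_i(K|L_0, \Z^n|L_0)$ coincides with the covering radius of $K|L_0$ in $L_0$. The covering inequality~\eqref{eqn_mu_n} applied in dimension~$i$ yields $\mu_i(K)^i\vol_i(K|L_0) \geq \det(\Z^n|L_0)$, and the inclusion $K \subseteq (K|L_0) + (K|L_0^\perp)$ gives $\vol_n(K) \leq \vol_i(K|L_0)\vol_{n-i}(K|L_0^\perp)$, so that combining the two
\[
\mu_i(K)^i\,\vol_n(K) \geq \det(\Z^n|L_0)\,\vol_{n-i}(K|L_0^\perp).
\]
The remaining factor $\mu_i(K)^{n-i}$ would be absorbed through a flatness-style transference in the complementary direction: passing to the quotient lattice $\Lambda' = \Z^n/(\Z^n \cap L_0)$ of rank $n-i$ in $L_0^\perp$, the Kannan--Lov\'asz transference theorem gives $\mu_i(K, \Z^n) \geq \flt(n)^{-1}\mu_1(K|L_0^\perp, \Lambda')$ for a suitable $\flt(n)$, and Banaszczyk's bound yields $\flt(n) = c\,n(1+\log n)$. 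An application of the asymptotic Makai inequality~\eqref{eqnAsymptoticMakai} to the $(n-i)$-dimensional body $K|L_0^\perp$ with respect to $\Lambda'$, together with the determinant identity $\det(\Z^n|L_0)\det(\Lambda')=1$, would close the chain and produce the factor $i!/n!$ after collecting the $\flt(n)^{n-i}$ loss.

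For part~(ii), I would fix an $i$-subset $I\subseteq[n]$ and split $\R^n = \R^i_I \times \R^{n-i}_{I^c}$. The hypothesis $L_I \subseteq \mu_i(K)K + \Z^n$ unfolds, via the fiber decomposition, into: for every $y \in \R^i$ there exist $z \in \Z^i$ and $c \in \Z^{n-i}$ with $(y-z,-c)\in\mu_i(K)K$. With $K_t^I := \{y\in\R^i : (y,t)\in K\}$ denoting the $i$-dimensional slice of~$K$ at fiber~$t$, this amounts to the covering relation $\bigcup_{c\in\Z^{n-i}}(\mu_i(K)K^I_{c/\mu_i(K)} + \Z^i) = \R^i$, and subadditivity of volume on the torus $\R^i/\Z^i$ gives the slice-sum inequality
\[
\mu_i(K)^i \sum_{c\in\Z^{n-i}}\vol_i\bigl(K^I_{c/\mu_i(K)}\bigr)\ge 1.
\]
Brunn's theorem makes $t\mapsto\vol_i(K^I_t)^{1/i}$ concave on its support, and the standard estimate
\[
\vol_n(K) \ge \frac{i!\,(n-i)!}{n!}\,\max_t\vol_i(K^I_t)\,\vol_{n-i}(K|L_{I^c})
\]
for $\tfrac{1}{i}$-concave slice functions, combined with a convex-body lattice-point bound on $\mu_i(K)\,K|L_{I^c}$, would control the slice sum by a fixed multiple of $\mu_i(K)^{n-i}\vol_n(K)$. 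Multiplying the resulting $\binom{n}{i}$ inequalities and invoking the Loomis--Whitney--Meyer inequality $\vol_n(K)^{\binom{n-1}{i-1}}\le\prod_{|I|=i}\vol_i(K|L_I)$ on the coordinate projections would balance the exponents and deliver the sharp constant $i!^{n/i}/n!$, which is attained by the crosspolytope at $i=1$ and by the cube at $i=n$.

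The main obstacle is the transference step of part~(i): $L_0^\perp$ is generically not a lattice plane of $\Z^n$, so one must pass to the quotient lattice $\Lambda'$ and carefully track determinants; this is precisely where the factor $\flt(n)^{n-i}$ enters and also why the sharp constant remains out of reach. In part~(ii), the delicate point is coupling the Brunn-based slice-sum estimate with the Loomis--Whitney--Meyer projection inequality in a way that yields the exact exponent $n/i$ in $i!^{n/i}$ rather than a weaker combinatorial surrogate.
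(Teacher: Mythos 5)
Both parts of your plan contain a step that does not go through. In part~(i), the displayed inequality $\mu_i(K)^i\vol_n(K)\geq\det(\Z^n|L_0)\vol_{n-i}(K|L_0^\perp)$ does not follow from the two facts you cite: the Fubini-type bound $\vol_n(K)\leq\vol_i(K|L_0)\vol_{n-i}(K|L_0^\perp)$ points in the wrong direction, and a product of two \emph{complementary projections} can only upper-bound $\vol(K)$. To lower-bound $\vol(K)$ you must pair a projection with a \emph{section} of the complementary dimension via Rogers--Shephard~\eqref{eqnRogersShephard}, and your sketch never produces a lower bound for any section of $K$. This is precisely where the paper's proof does its real work: Jarn\'ik's inequality plus the flatness theorem give $\lambda_{n-i}(K)\leq2\flt(n)\mu_i(K)$, which inscribes a crosspolytope of lattice vectors into $2\flt(n)\mu_i(K)K\cap L$ for some $(n{-}i)$-dimensional lattice plane $L$ and hence bounds $\vol_{n-i}(K\cap L)$ from below; that is where $\flt(n)^{-(n-i)}$ and the factor $2^{n-i}/(n-i)!$ enter. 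Your ``transference in the complementary direction'' is both misplaced and unnecessary: $\mu_i(K)\geq\mu_1(K|L_0^\perp,\Z^n|L_0^\perp)$ already holds with constant $1$ by \cref{lem_mu_geq_mu_proj} and monotonicity, so no $\flt(n)$ loss arises there, and invoking the asymptotic Makai bound~\eqref{eqnAsymptoticMakai} (which rests on Kuperberg's theorem) would in any case yield different constants from the ones claimed.

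In part~(ii), your slice-sum inequality on the torus is a correct consequence of the covering hypothesis, but the closing step is backwards: the Loomis--Whitney inequality for coordinate \emph{projections}, $\vol_n(K)^{\binom{n-1}{i-1}}\leq\prod_{|I|=i}\vol_i(K|L_I)$, is an \emph{upper} bound on $\vol(K)$ and cannot complete a proof of a lower bound. What is needed is Meyer's dual inequality for \emph{sections} through the origin, $\left(n!\vol(K)\right)^{i/n}\geq i!\bigl(\prod_{|J|=i}\vol_i(K\cap L_J)\bigr)^{1/\binom{n}{i}}$, which is exactly how the paper argues: the covering hypothesis together with~\eqref{eqn_mu_n} gives $\vol_i(\mu_i(K)K\cap L_J)\geq1$ for every $i$-subset $J$, and Meyer's inequality then delivers $i!^{n/i}/n!$ in one line. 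Your intermediate apparatus (Brunn's theorem, the $\tfrac{i!(n-i)!}{n!}$ slice estimate, a lattice-point count on $\mu_i(K)K|L_{I^c}$) introduces losses that would not recover the sharp constant even if the final inequality were replaced by the correct one.
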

\begin{proof}
\romannumeral1): First of all, Jarn\'{i}k's inequality~\cite{jarnik1941zwei} (cf.~\cite[Lem.~(2.4)]{kannanlovasz1988covering}) claims that $\lambda_n(K)/2\leq\mu_n(K)$.
In view of the \emph{flatness theorem} there is a constant $\flt(n)$ only depending on~$n$ such that $\mu_n(K)\leq\flt(n)\mu_1(K)$ (cf.~\cite[Thm.~(2.7)]{kannanlovasz1988covering}).
Therefore, $\lambda_n(K)\leq 2\flt(n)\mu_1(K)$, and since $\lambda_i(K)$ and $\mu_i(K)$ form non-decreasing sequences with respect to $i\in[n]$, we get $\lambda_{n-i}(K)\leq2\flt(n)\mu_i(K)$.
This means that there is an $(n\!-\!i)$-dimensional lattice plane $L\in\L_{n-i}(\Z^n)$ and linearly independent vectors $a_1,\ldots,a_{n-i}\in\Z^n\cap L$, such that the crosspolytope $\conv\{\pm a_1,\ldots,\pm a_{n-i}\}$ is contained in the body $2\flt(n)\mu_i(K) K\cap L$.
Thus,
\begin{align}
\vol_{n-i}(K\cap L)&\geq\left(\frac1{2\flt(n)\mu_i(K)}\right)^{n-i}\frac{2^{n-i}}{(n-i)!}\det(\Z^n\cap L),\label{eqnFirstBoundsIntersection}
\end{align}
where, for any Lebesgue-measurable $i$-dimensional set $S\subseteq\R^n$, we denote by $\vol_i(S)$ the volume of $S$ computed in its affine hull.
Let $L^\perp$ be the orthogonal complement of~$L$.
By~\eqref{eqn_KL_max_mu}, we have $\mu_i(K)\geq\mu_i(K|L^\perp,\Z^n|L^\perp)$, and applying inequality~\eqref{eqn_mu_n} to the projected body $K|L^\perp$, we obtain
\begin{align}
\mu_i(K)^i\vol_i(K|L^\perp)&\geq\det(\Z^n|L^\perp).\label{eqnFirstBoundsProjection}
\end{align}
Finally, we utilize an inequality of Rogers \& Shephard~\cite[Thm.~1]{rogersshephard1958convex}, which states that
\begin{align}
\vol_{n-i}(K\cap L)\vol_i(K|L^\perp)\leq\binom{n}{i}\vol(K).\label{eqnRogersShephard}
\end{align}
Putting together~\eqref{eqnFirstBoundsIntersection}, \eqref{eqnFirstBoundsProjection}, and~\eqref{eqnRogersShephard}, and using $\det(\Z^n\cap L)\det(\Z^n|L^\perp)=\det(\Z^n)=1$ (see~\cite[Prop.~1.9.7]{martinet2003perfect}), we arrive at
\begin{align*}
\mu_i(K)^n\vol(K)&\geq\frac{\mu_i(K)^n\vol_i(K|L^\perp)\vol_{n-i}(K\cap L)}{\binom{n}{i}\det(\Z^n|L^\perp)\det(\Z^n\cap L)}\\
&\geq\frac{\mu_i(K)^{n-i}}{\binom{n}{i}}\frac{1}{\left(\flt(n)\mu_i(K)\right)^{n-i}(n-i)!}\geq\frac{i!}{n!}\flt(n)^{-(n-i)}.
\end{align*}
Due to a result of Banaszczyk~\cite{banaszczyk1996inequalities}, we can choose $\flt(n)=c\,n(1+\log{n})$, for some absolute constant $c>0$.

\romannumeral2): For $J\subseteq[n]$, let $L_J=\lin\{e_j:j\in J\}$ be the linear subspace spanned by $e_j$, $j\in J$.
An inequality of Meyer~\cite{meyer1988a} states that, for all $i\in[n]$, we have
\[\left(n!\vol(K)\right)^{\frac{i}{n}}\geq i!\Bigg(\prod_{J \subseteq [n], |J|=i}\vol_i(K\cap L_J)\Bigg)^{\frac1{\binom{n}{i}}}.\]
By assumption, every $i$-dimensional coordinate hyperplane $L_J$, $J \subseteq [n]$, $|J|=i$, is covered by the translates $\mu_i(K)K+\Z^n$.
Therefore, by~\eqref{eqn_mu_n} we have that $\vol_i\left(\mu_i(K)K\cap L_J\right)\geq1$.
Hence, Meyer's inequality gives us
\[\mu_i(K)^n\vol(K)=\vol(\mu_i(K)K)\geq\frac{i!^{\frac{n}{i}}}{n!}.\qedhere\]
\end{proof}

\begin{remark}\label{remRoughBoundsMuI}\
\begin{enumerate}[i)]
 \item For $k\in\N$ a constant, \cref{prop_first_bounds_mu_j}~\romannumeral1) yields a lower bound of the type $\mu_{n-k}(K)^n\vol(K)\geq\frac{c}{n^{2k}(\log{n})^k}$ which dramatically improves upon the bound that follows from the monotonicity of the $\mu_i(K)$ and the known asymptotic bounds~\eqref{eqnAsymptoticMakai} on $\mu_1(K)^n\vol(K)$.
 \item The case $i=1$ in \cref{prop_first_bounds_mu_j}~\romannumeral2) is included in the more general setting that $\mu_1(K)K+\Z^n$ is connected, which has been studied in~\cite{toth1973on,groemer1966zusammen}.
\end{enumerate}
\end{remark}

\noindent Our next goal is to derive sharp lower bounds on $\mu_i(K)^n\vol(K)$ on a particular family of convex bodies $K$.
Before we state our result, we introduce and investigate a family of polytopes that interpolates between the cube and the crosspolytope.
For every $i\in[n]$, let
\[P_{n,i}=\conv\{\pm e_{j_1}\pm\ldots\pm e_{j_i}:1\leq j_1<\ldots<j_i\leq n\}=C_n\cap iC_n^\star.\]
Note that $P_{n,n}=C_n$ and $P_{n,1}=C_n^\star$.
Moreover, $P_{3,2}$ is the cubeoctahedron and $P_{4,2}$ is the $24$-cell.
The facet $P_{n,i}\cap\{x\in\R^n:x_1+\ldots+x_n=i\}$ of~$P_{n,i}$ is known as the \emph{$i$-th hypersimplex} and usually denoted by $\Delta_{n-1}(i)$ (see~\cite[Ex.~0.11]{ziegler1995lectures} for more information and the origin of these interesting polytopes).
We may therefore think of the $P_{n,i}$ as the symmetric cousins of the hypersimplices.
The covering minima and the volume of these special polytopes can be computed explicitly as follows.

\begin{proposition}\label{prop_mu_j_vol_P_ni}\
\begin{enumerate}[i)]
 \item For $i\in[n]$, we have
  \[\mu_j(P_{n,i})=\frac12,\text{ for }j\leq i,\quad\text{and}\quad\mu_j(P_{n,i})=\frac{j}{2i},\text{ for }j>i.\]
  In particular, $\mu_i(C_n)=1/2$ and $\mu_i(C_n^\star)=i/2$, for all $i\in[n]$.
 \item For $i\in[n]$, we have\footnote{Aicke Hinrichs (personal communication) pointed out to us that the volume of $P_{n,i}$ can also be computed via a probabilistic argument based on the Irwin-Hall distribution.}
\[\vol(P_{n,i})=\frac{2^n}{n!}\sum_{k=0}^i(-1)^k\binom{n}{k}(i-k)^n.\]
\end{enumerate}
\end{proposition}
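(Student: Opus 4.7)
The statement has two essentially independent parts, which I would attack separately.

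For part (i), my plan is to establish $\mu_j(P_{n,i}) = \max(\tfrac12, \tfrac{j}{2i})$ by matching bounds. The lower bound comes from the Kannan--Lov\'asz formula~\eqref{eqn_KL_max_mu} tested on the coordinate plane $L_J = \lin\{e_k : k \in J\}$ with $|J|=j$. A short look at the vertex set shows $P_{n,i}|L_J = C_j$ when $i \geq j$ (extend $J$ to an $i$-subset to realize every sign pattern on $J$) and $P_{n,i}|L_J = P_{j,i}$ when $i < j$. The first case yields $\mu_j(P_{n,i}) \geq \mu_j(C_j,\Z^j) = \tfrac12$. For the second case, the deep hole $y=(\tfrac12,\ldots,\tfrac12)\in\R^j$ gives $\|y-z\|_{P_{j,i}} = \max(\|y-z\|_\infty, \|y-z\|_1/i) \geq j/(2i)$ for every $z\in\Z^j$, since any non-nearest integer choice only increases both component norms.

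For the matching upper bound I would argue constructively, bypassing the projection formula. Given any $(n-j)$-dimensional affine subspace $M$ with direction $L$, elementary linear algebra produces a subset $T\subseteq[n]$ with $|T|=j$ and $L\cap\R^T=\{0\}$: index $T$ by $j$ linearly independent columns of any basis matrix of $L^\perp$. Then the coordinate projection $M \to \R^{[n]\setminus T}$ is bijective, so there is a unique $y\in M$ whose coordinates outside $T$ equal a prescribed integer vector; choosing that vector to be the nearest-integer rounding of some reference point, and then rounding $y_T$ to the nearest integer, produces $y\in M$ and $z\in\Z^n$ with $y-z$ supported on $T$ and each component of modulus at most $\tfrac12$. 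Consequently $\|y-z\|_\infty\leq\tfrac12$ and $\|y-z\|_1\leq\tfrac{j}{2}$, so $y-z\in \mu P_{n,i}$ for $\mu=\max(\tfrac12,\tfrac{j}{2i})$, which proves $\mu_j(P_{n,i})\le \mu$.

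For part (ii), the symmetry of $P_{n,i}$ under coordinate sign changes reduces the problem to a single orthant: $\vol(P_{n,i}) = 2^n\vol\{x\in[0,1]^n:x_1+\cdots+x_n\leq i\}$. For this latter volume I would use inclusion-exclusion over the upper-bound constraints $x_k\leq 1$: starting from the simplex $\{x\geq 0,\ \sum_k x_k\leq i\}$ of volume $i^n/n!$ and subtracting, for each size-$k$ subset $S\subseteq[n]$, the translated subsimplex $\{x\geq 0,\ x_k\geq 1\text{ for }k\in S,\ \sum_k x_k \leq i\}$ of volume $(i-k)^n/n!$ (which vanishes for $k>i$), with alternating sign $(-1)^k$, one arrives at $\frac{1}{n!}\sum_{k=0}^i(-1)^k\binom{n}{k}(i-k)^n$; multiplying by $2^n$ yields the claim. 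The principal obstacle I anticipate lies in part (i): a naive application of~\eqref{eqn_KL_max_mu} would require controlling projections of $P_{n,i}$ onto arbitrary $j$-dimensional lattice planes, but the constructive rounding argument above sidesteps this difficulty and delivers the upper bound uniformly.
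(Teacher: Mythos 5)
Your proposal is correct, and for part (i) it follows the same basic strategy as the paper: lower bounds come from \eqref{eqn_KL_max_mu} applied to coordinate subspaces, using $P_{n,i}|L_J=C_j$ for $j\leq i$ and $P_{n,i}|L_J=P_{j,i}$ together with the deep hole $\frac12{\bf 1}$ for $j>i$. The difference is in the upper bound: the paper only records that every $j$-dimensional coordinate subspace is contained in $\mu P_{n,i}+\Z^n$ for $\mu=\max(\frac12,\frac{j}{2i})$ and leaves implicit the step that every $(n-j)$-dimensional affine subspace must then meet the arrangement; your rounding construction (choose $T$ with $L\cap\R^T=\{0\}$, fix the coordinates off $T$ to integers, round the coordinates on $T$) makes exactly this transversality step explicit and handles both regimes of $j$ uniformly via the gauge identity $\|x\|_{P_{n,i}}=\max(\|x\|_\infty,\|x\|_1/i)$ --- a worthwhile clarification rather than a new idea. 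For part (ii), the paper reduces to the positive orthant in the same way but then quotes the Eulerian-number formula for the slices $\{k-1\leq\sum x_i\leq k\}$ of the cube and the explicit expression for $A_{n,k}$, whereas you compute $\vol\{x\in[0,1]^n:\sum x_k\leq i\}$ directly by inclusion--exclusion over the constraints $x_k\leq1$; this is precisely the Irwin--Hall computation mentioned in the paper's footnote, is self-contained, and avoids the detour through Eulerian numbers at the cost of not exhibiting the combinatorial interpretation of the slice volumes. Both parts of your argument are complete and sound.
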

\begin{proof}
\romannumeral1): Let $j\in\{1,\ldots,i\}$ and let $L_j$ be a $j$-dimensional coordinate subspace. Then, $P_{n,i}|L_j=P_{n,i}\cap L_j=C_j$, where we identify $L_j$ with $\R^j$.
Since also $L_j\subseteq\frac12 P_{n,i}+\Z^n$, we get $\mu_j(P_{n,i})=1/2$.
For the case $j\in\{i+1,\ldots,n\}$, we first note that since $P_{n,i}\subseteq C_n$ and $(i/n,\ldots,i/n)$ lies in the boundary of~$P_{n,i}$, we have $\mu_n(P_{n,i})=n/(2i)$.
Using that $P_{n,i}|L_j=P_{j,i}$, for all $j$-dimensional coordinate subspaces $L_j$, this implies $\mu_j(P_{n,i})=j/(2i)$.

\romannumeral2): It is known that (see~\cite{stanley1977eulerian} for instance), for any $k\in[n]$,
\[\vol\Big([0,1]^n\cap\Big\{x\in\R^n:k-1\leq\sum_{i=1}^nx_i\leq k\Big\}\Big)=\frac1{n!}A_{n,k},\]
where $A_{n,k}$ denotes the Eulerian numbers.
Therefore,
\begin{align*}
\vol(P_{n,i})&=2^n\vol\Big([0,1]^n\cap\Big\{x\in\R^n:0\leq\sum_{i=1}^nx_i\leq i\Big\}\Big)=\frac{2^n}{n!}\sum_{k=1}^iA_{n,k},
\end{align*}
which implies the desired formula in view of the identity (cf.~\cite[Ch.~2]{beckrobins2015computing2nd})
\[A_{n,k}=\sum_{j=0}^k(-1)^j\binom{n+1}{j}(k-j)^n\]
and routine algebraic manipulations.
\end{proof}

Now, recall that a convex body $K\in\K^n_o$ is called \emph{standard unconditional} if for every $x\in K$, we have $(\pm x_1,\ldots,\pm x_n)\in K$, that is, $K$ is symmetric with respect to every coordinate hyperplane.
Observe that by construction the polytopes $P_{n,i}$ are standard unconditional.

\begin{theorem}\label{prop_mu_j_vol_uncondis_standard}
Let $K\in\K^n_o$ be standard unconditional and let $i\in[n]$. Then,
\[\mu_i(K)^n\vol(K)\geq\mu_i(P_{n,i})^n\vol(P_{n,i}).\]
Equality holds if and only if $K=\frac1{2\mu_i(K)}P_{n,i}$.
\end{theorem}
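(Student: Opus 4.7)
The plan is to show directly that every standard unconditional body $K\in\K^n_o$ contains a suitably scaled copy of $P_{n,i}$, after which the claimed inequality drops out from the values $\mu_i(P_{n,i})=\tfrac12$ and $\vol(P_{n,i})$ computed in \cref{prop_mu_j_vol_P_ni}.

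Set $\mu=\mu_i(K)$ and fix a subset $J\subseteq[n]$ with $|J|=i$. The $(n-i)$-dimensional affine subspace $V_J=\{x\in\R^n:x_k=\tfrac12\text{ for all }k\in J\}$ is, by the very definition of the $i$-th covering minimum, hit by some $\mu y+z$ with $y\in K$ and $z\in\Z^n$. Matching the $J$-coordinates gives $\mu y_k\in\tfrac12+\Z$, and hence $|y_k|\geq\tfrac1{2\mu}$ for every $k\in J$.

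Next I would use that every standard unconditional convex body is \emph{down-closed} with respect to coordinate-wise absolute value: if $x\in K$ and $|w_k|\leq|x_k|$ for all $k$, then $w\in K$. This is an elementary consequence of unconditionality, since $w$ can be written as a convex combination of the $2^n$ sign-reflections of $x$, each of which lies in $K$. Applied to the $y$ produced above, this places the point $y'$ with $y'_k=\operatorname{sgn}(y_k)\tfrac1{2\mu}$ for $k\in J$ and $y'_k=0$ for $k\notin J$ into $K$. Invoking unconditionality once more to flip signs on $J$ freely, all $2^i$ vectors $\sum_{k\in J}\epsilon_k\tfrac1{2\mu}e_k$, $\epsilon\in\{\pm1\}^J$, belong to $K$. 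Running this argument over every $J\subseteq[n]$ with $|J|=i$ shows that $K$ contains all vertices of $\tfrac1{2\mu}P_{n,i}$, so by convexity $K\supseteq\tfrac1{2\mu}P_{n,i}$.

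Taking volumes yields $\vol(K)\geq(2\mu)^{-n}\vol(P_{n,i})$, which rearranges via $\mu_i(P_{n,i})=\tfrac12$ to
\[\mu_i(K)^n\vol(K)\;\geq\;2^{-n}\vol(P_{n,i})\;=\;\mu_i(P_{n,i})^n\vol(P_{n,i}).\]
For the equality case, if equality holds then $\tfrac1{2\mu}P_{n,i}\subseteq K$ and the two bodies have the same volume, which forces $K=\tfrac1{2\mu_i(K)}P_{n,i}$. The argument is really one short geometric observation; the only mildly delicate step is the passage from the mere existence of a single $y\in K$ with $|y_k|\geq 1/(2\mu)$ on $J$ to the full set of $2^i$ crosspolytope vertices inside $K\cap L_J$, and this is precisely what the down-closure lemma supplies.
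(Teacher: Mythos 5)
Your proof is correct and follows essentially the same route as the paper's: both arguments reduce to showing $\frac1{2\mu_i(K)}P_{n,i}\subseteq K$ by producing, for each $J\subseteq[n]$ with $|J|=i$, a point of $K$ whose $J$-coordinates all have absolute value at least $\frac1{2\mu_i(K)}$, and then invoking the down-closedness of standard unconditional bodies. The only cosmetic difference is that you locate this point directly from the definition of $\mu_i$ via the affine subspace $\{x:x_k=\tfrac12,\ k\in J\}$, whereas the paper first passes to the coordinate projections $K|L_J=K\cap L_J$ using the Kannan--Lov\'asz identity \eqref{eqn_KL_max_mu} and applies the covering property inside $L_J$.
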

\begin{proof}
As before, for $i\in[n]$ and $J \subseteq [n]$, $|J|=i$, we let $L_J=\lin\{e_j:j\in J\}$.
By the unconditionality of $K$, we get $K|L_J=K\cap L_J$, and furthermore $\Z^n|L_J=\Z^n\cap L_J=\Z^i$, where we identify $L_J$ with $\R^i$.
Since the claimed inequality is invariant under scalings of~$K$, we may assume that
\[\mu_i(K)=\max\{\mu_i(K|L,\Z^n|L):L\in\L_i(\Z^n)\}=1.\]
Then, clearly $\mu_i(K|L_J,\Z^n|L_J)\leq1$, which, by the above observations, means that $K\cap L_J+\Z^n\cap L_J$ covers $L_J$, for all $J \subseteq [n]$, $|J|=i$.
The body $K\cap L_J$ is a standard unconditional body in $L_J$.
Therefore, the covering property yields that $\frac12\sum_{j\in J}e_j\in K\cap L_J\subseteq K$, for all $J \subseteq [n]$, $|J|=i$.
By the definition of~$P_{n,i}$, we see that $\frac12 P_{n,i}\subseteq K$, and hence, using~\cref{prop_mu_j_vol_P_ni}~\romannumeral1), $\vol(K)\geq\vol(\frac12 P_{n,i})=\mu_i(P_{n,i})^n\vol(P_{n,i})$ as desired.

Equality holds if and only if $K=\frac12 P_{n,i}$ which implies the claimed equality case characterization because we assumed that $\mu_i(K)=1$.
\end{proof}

Since the cases $i=n$ and $i=1$ of \cref{prop_mu_j_vol_uncondis_standard} correspond exactly to~\eqref{eqn_mu_n} and~\eqref{eqnMakaiConjSymmetric}, respectively, it is tempting to conjecture that the polytopes~$P_{n,i}$ minimize the functional $\mu_i(K)^n\vol(K)$ on the whole class~$\K^n_o$ of $o$-symmetric convex bodies.
However, the following examples show that this is not the case in general:
Consider the \emph{checkerboard lattice}
\[\Lambda_o=\left\{x\in\Z^n:x_1+\ldots+x_n\equiv0\text{ mod }2\right\},\]
which is a sublattice of $\Z^n$ of determinant $\det(\Lambda_o)=2$ (see~\cite[Ch.~4]{martinet2003perfect}).
We leave it to the reader to check that $\frac12C_n+\Lambda_o$ intersects every line and $C_n^\star+\Lambda_o$ intersects every $(n\!-\!2)$-dimensional affine subspace (cf.~\cite[pg.~588]{kannanlovasz1988covering}).
Moreover, we cannot shrink $\frac12C_n$ or $C_n^\star$ in order to maintain the corresponding intersection property, and hence $\mu_{n-1}(C_n,\Lambda_o)=1/2$ and $\mu_2(C_n^\star,\Lambda_o)=1$.
In view of \cref{prop_mu_j_vol_P_ni}, we obtain
\[\mu_{n-1}(P_{n,n-1})^n\vol(P_{n,n-1})=\frac{n!-1}{n!}>\frac12=\mu_{n-1}(C_n,\Lambda_o)^n\frac{\vol(C_n)}{\det(\Lambda_o)},\]
and
\[\mu_2(P_{n,2})^n\vol(P_{n,2})=\frac{2^n-n}{n!}>\frac{2^{n-1}}{n!}=\mu_2(C_n^\star,\Lambda_o)^n\frac{\vol(C_n^\star)}{\det(\Lambda_o)},\]
for every $n\geq3$. 
\section{Results concerning the covering product}
\label{sectCoveringProduct}

In this section, we are interested in the dual version of Minkowski's inequality~\eqref{eqnMinkowskiSecond}, that is, in lower bounds on the covering product~\eqref{eqnMainProblem2} in \cref{probMainProblem}.
For the case of planar convex bodies this was completely solved by Schnell~\cite{schnell1995a}.
His result shows that, unlike \cref{conjMakai}, the covering product does not distinguish between $o$-symmetric and general convex bodies.

\begin{theorem}[Schnell~\cite{schnell1995a}]\label{thmSchnell}
For any $K\in\K^2$, one has
\[\mu_1(K)\mu_2(K)\vol(K)\geq\frac34.\]
Equality holds, up to transformations that do not change the covering product, exactly for one triangle, one parallelogram, one trapezoid, one pentagon, and one hexagon.
\end{theorem}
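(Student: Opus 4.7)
The plan is to leverage the affine invariance of the covering product under simultaneous $\GL_2(\R)$-transformations of $K$ and $\Lambda$ in order to reduce to $\Lambda=\Z^2$. Using the identity $\mu_1(K,\Z^2)=1/\omega_{\Z^2}(K)$ noted just after \eqref{eqnDualitySuc1Cov1}, I rescale so that $\mu_1(K)=1$, i.e.\ $\omega_{\Z^2}(K)=1$, and apply a unimodular transformation placing the direction attaining the minimal lattice width along $e_1$. Then $K\subseteq[0,1]\times\R$ and the claim reduces to $\mu_2(K)\vol(K)\geq 3/4$.

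If $\mu_2(K)=1$, then $K+\Z^2=\R^2$. Since the horizontal extent of $K$ equals one, for each $x\in(0,1)$ only shifts of the form $(0,b)$ with $b\in\Z$ can reach the vertical line $\{x\}\times\R$ from $K$, so the fiber $K^x=\{y:(x,y)\in K\}$ must have length at least one. Fubini then yields $\vol(K)\geq 1$, hence $\mu_2(K)\vol(K)\geq 1>3/4$, strictly.

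The delicate case is $\mu_2(K)>1$, where the extremizers live; for instance, in our normalization the simplex $\frac{1}{2}\conv\{e_1,e_2,-(e_1+e_2)\}$ satisfies $\mu_2=2$ and $\vol=3/8$, realizing $\mu_2\vol=3/4$. The two generic inequalities at hand---the covering bound $\mu_2^2\vol\geq 1$ from \eqref{eqn_mu_n} and the planar instance $\mu_1^2\vol\geq 3/8$ of Makai's conjecture (known for $n=2$)---multiply only to $\mu_1\mu_2\vol\geq\sqrt{3/8}\approx 0.61$, short of the target $3/4$. Thus I must exploit the fact that equality in these two bounds cannot occur simultaneously. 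My plan is to approximate $K$ by convex polygons (using the continuity of $\mu_1$, $\mu_2$, and $\vol$) and then classify the critical polygons---those at which the functional $\mu_2\vol$ is locally minimal subject to the normalization. At such a critical $K$, only finitely many lattice-width constraints $\omega(K,v)\geq 1$ with primitive $v\in\Z^2$ and only finitely many covering contacts of $\mu_2K+\Z^2$ with $\R^2$ are active, reducing the problem to a finite combinatorial enumeration.

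The main obstacle is this enumeration: one must determine which collections of active lattice directions and deep-hole contacts can coexist at a critical polygon, and argue that the only possibilities yield exactly the five combinatorial types listed---triangle, parallelogram, trapezoid, pentagon, and hexagon. Once the classification is in hand, an explicit finite-parameter minimization within each type should produce the sharp value $3/4$ and pin down the unique minimizer modulo the covering-product-preserving symmetries (scaling together with the action of $\GL_2(\Z)$), yielding the five extremal bodies in the theorem.
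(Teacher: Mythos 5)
This theorem is quoted from Schnell's paper \cite{schnell1995a}; the present paper gives no proof of it, so there is no in-text argument to compare yours against. Judged on its own terms, your write-up is a proof \emph{plan} rather than a proof, and the gap sits exactly where the theorem's content lies. Your reduction to $\Lambda=\Z^2$, the normalization $\mu_1(K)=\omega_{\Z^2}(K)^{-1}=1$ with $K\subseteq[0,1]\times\R$, and the Fubini argument disposing of the case $\mu_2(K)=1$ are all correct, and your observation that the two off-the-shelf bounds $\mu_2(K)^2\vol(K)\geq1$ and $\mu_1(K)^2\vol(K)\geq 3/8$ only multiply to $\sqrt{3/8}<3/4$ correctly identifies why a genuinely two-dimensional argument is needed. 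The extremal example $\tfrac12 T_2$ with $\mu_2=2$, $\vol=3/8$ is also right.

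But everything after that is deferred: you assert that at a critical polygon only finitely many width constraints and covering contacts are active (plausible, but itself requiring an argument --- e.g.\ why a local minimizer of $\mu_2\vol$ under the constraint $\omega_{\Z^2}(K)\geq 1$ must be a polygon with few edges, and why the covering condition $\mu_2 K+\Z^2=\R^2$ localizes to finitely many ``deep holes''), and you then state that the enumeration of admissible configurations ``should'' yield the five types and the value $3/4$. That enumeration and the per-type optimization are the entire substance of Schnell's theorem, including the equality characterization (one triangle, one parallelogram, one trapezoid, one pentagon, one hexagon); without carrying them out, neither the constant $3/4$ nor the list of extremizers is established. As it stands the argument proves only $\mu_1\mu_2\vol\geq\sqrt{3/8}$ in general and $\geq 1$ when $\mu_2=\mu_1$.
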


In the proof of~\cite[Lem.~(2.5)]{kannanlovasz1988covering}, the authors derive the following very useful property.
Because the arguments are somewhat implicitly given in~\cite{kannanlovasz1988covering}, we provide the reader with a short proof.

\begin{lemma}\label{lem_mu_geq_mu_proj}
Let $K\in\K^n$ and let $j\in[n]$. Then,
\[\mu_j(K)\geq\mu_j(K|L,\Z^n|L),\]
for every lattice plane $L\in\L_i(\Z^n)$ of dimension $i\geq j$.
\end{lemma}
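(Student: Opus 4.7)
The plan is to unfold the definition of $\mu_j(K)$ and exhibit, for every $(i-j)$-dimensional affine subspace $M'\subseteq L$, a witness point in $(\mu_j(K)\,K|L+\Z^n|L)\cap M'$. The key geometric idea is to lift $M'$ to an $(n-j)$-dimensional affine subspace of $\R^n$ by adding the orthogonal complement of $L$, apply the covering property of $\mu_j(K)K+\Z^n$ there, and then project back down to $L$.

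More precisely, set $\mu:=\mu_j(K)$ and fix an arbitrary $M'\in\A_{i-j}(L)$. Let $M:=M'+L^\perp$. Since $M'$ lies in $L$ and $L^\perp$ is $(n-i)$-dimensional and complementary to $L$, the set $M$ is an affine subspace of $\R^n$ of dimension $(i-j)+(n-i)=n-j$. By the definition of $\mu_j(K)$ there exist $x\in\mu K$ and $z\in\Z^n$ with $x+z\in M$. Projecting this relation orthogonally onto $L$ and using that $L^\perp|L=\{0\}$ yields $M|L=M'$, and hence
\[
x|L+z|L\;=\;(x+z)|L\;\in\;M'.
\]
Since $x|L\in\mu(K|L)$ and $z|L\in\Z^n|L$, this shows $\bigl(\mu\,(K|L)+\Z^n|L\bigr)\cap M'\neq\emptyset$. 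As $M'\in\A_{i-j}(L)$ was arbitrary, the definition of the $j$-th covering minimum of $K|L$ with respect to the lattice $\Z^n|L$ in $L$ gives $\mu_j(K|L,\Z^n|L)\leq\mu=\mu_j(K)$.

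The only subtle point to check carefully is that $\Z^n|L$ is a genuine full-rank lattice in $L$ (so that the covering minimum $\mu_j(K|L,\Z^n|L)$ is well-defined), but this is exactly the reason the hypothesis requires $L$ to be a \emph{lattice} plane: for $L\in\L_i(\Z^n)$, the orthogonal projection $\Z^n|L$ is discrete and spans $L$, so it is a lattice of rank $i$. Besides this, the argument is essentially a direct verification and no further obstacles are expected; note in particular that the case $i=j$ recovers the ``$\leq$'' direction of the Kannan--Lov\'asz identity~\eqref{eqn_KL_max_mu}.
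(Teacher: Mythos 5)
Your proof is correct and follows essentially the same route as the paper's: lift an $(i-j)$-dimensional affine subspace of $L$ to the $(n-j)$-dimensional subspace obtained by adding $L^\perp$, apply the covering property of $\mu_j(K)K+\Z^n$, and project back onto $L$. The only differences are cosmetic (you keep $\mu$ general rather than normalizing $\mu_j(K)=1$, and you swap the labels $M$, $M'$), and your added remark that $\Z^n|L$ is a full-rank lattice in $L$ is a correct point the paper notes just after~\eqref{eqn_KL_max_mu}.
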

\begin{proof}
Let $i\geq j$ and let $L\in\L_i(\Z^n)$.
Assume that $\mu_j(K)=1$, that is, every $(n\!-\!j)$-dimensional affine subspace intersects $K+\Z^n$.
Now, let $M$ be an $(i\!-\!j)$-dimensional affine subspace in~$L$.
Consider the subspace $M'$ that is the preimage of $M$ under the projection onto $L$, in symbols, $M'=M\oplus L^\perp$.
Clearly, $M'$ is an $(n\!-\!j)$-dimensional subspace in $\R^n$.
By assumption it must intersect $K+\Z^n$, and hence $(M'\cap(K+\Z^n))|L=M\cap(K|L+\Z^n|L)\neq\emptyset$.
Therefore, $\mu_j(K)=1\geq\mu_j(K|L,\Z^n|L)$ as desired.
\end{proof}

With the help of this lemma we can give a lower bound on the covering product of an arbitrary convex body $K\in\K^n$, which, in view of the monotonicity of the sequence of covering minima, is a necessary inequality for \cref{conjMakai}~\eqref{eqnMakaiConjSymmetric} to hold.

\begin{theorem}\label{thm_mu_prod_weak_bound}
Let $K\in\K^n$. Then,
\[\mu_1(K)\cdot\ldots\cdot\mu_n(K)\vol(K)\geq\frac{1}{n!}.\]
\end{theorem}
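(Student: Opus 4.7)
The plan is to proceed by induction on the dimension $n$, using \cref{lem_mu_geq_mu_proj}, the monotonicity $\mu_1(K)\leq\ldots\leq\mu_n(K)$, and a cone-type comparison between $\vol(K)$ and the volume of an orthogonal projection times a width. The base case $n=1$ is immediate: a segment that covers $\R$ by $\Z$-translates has length at least one, which is precisely $\mu_1(K)\vol(K)\geq 1$.

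For the inductive step, I would project $K$ orthogonally onto the coordinate hyperplane $L=\{x_n=0\}$, which is a lattice hyperplane with $\Z^n|L=\Z^{n-1}$, and write $\bar K=K|L$ and $w=\omega(K,e_n)$. The induction hypothesis applied to $\bar K$, together with \cref{lem_mu_geq_mu_proj}, yields
\[
\prod_{j=1}^{n-1}\mu_j(K)\;\geq\;\prod_{j=1}^{n-1}\mu_j(\bar K,\Z^{n-1})\;\geq\;\frac{1}{(n-1)!\,\vol_{n-1}(\bar K)}.
\]
The whole proof then reduces to the single inequality $\mu_n(K)\vol(K)\geq \vol_{n-1}(\bar K)/n$, because multiplying this into the display immediately gives the desired bound $1/n!$.

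To establish this remaining inequality I would combine two observations. First, since $e_n$ is a primitive vector of $(\Z^n)^\star$, the identity $\mu_1(K)=1/\omega_{\Z^n}(K)$ together with the monotonicity of the covering minima yield $\mu_n(K)\geq\mu_1(K)\geq 1/w$. Second, the fiber-length function $\ell(y)=\vol_1(K\cap(y+\R e_n))$ on $\bar K$ is concave and non-negative by Brunn's principle, and its maximum equals exactly $w$: the two extreme points of $K$ in the $e_n$-direction project to a common point of $\bar K$, and the segment between them lies in $K$ by convexity, producing a fiber of length $w$. Comparing $\ell$ with the cone over $\bar K$ of height $w$ based at any maximizer of $\ell$ yields $\vol(K)=\int_{\bar K}\ell\,dy\geq w\vol_{n-1}(\bar K)/n$, and multiplying by $\mu_n(K)\geq 1/w$ closes the argument.

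The only step that needs real care is the cone bound; the key observation is that concavity of $\ell$ along each ray from a maximizer to the boundary of $\bar K$ forces $\ell$ to dominate the cone function, whose integral is the standard $w\vol_{n-1}(\bar K)/n$. Every other ingredient — the lattice-plane structure of $L$, the projection identity $\Z^n|L=\Z^{n-1}$, and the monotonicity $\mu_n\geq\mu_1$ — is either trivial or already recorded in the excerpt.
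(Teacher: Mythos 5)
There is a genuine gap, and it sits exactly where you flagged the need for care. The cone bound you want, $\vol(K)\geq \tfrac{1}{n}\,\omega(K,e_n)\,\vol_{n-1}(K|e_n^\perp)$, is false: the two points of $K$ extremal in the $e_n$-direction need not project to the same point of $\bar K$, so the maximal fiber length $\max_y\ell(y)$ can be strictly smaller than the width $w=\omega(K,e_n)$. Concavity of $\ell$ only gives $\vol(K)\geq\tfrac1n(\max_y\ell(y))\vol_{n-1}(\bar K)$, and the gap between $\max\ell$ and $w$ kills the argument. Concretely, for the triangle $K=\conv\{(0,0),(1,0),(2,1)\}\subseteq\R^2$ one has $\vol(K)=\tfrac12$, $w=1$, $\vol_1(\bar K)=2$, so the claimed bound would read $\tfrac12\geq1$. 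Worse, the intermediate inequality $\mu_n(K)\vol(K)\geq\vol_{n-1}(\bar K)/n$ to which you reduce the theorem is itself false for a projection direction fixed in advance: for the sheared parallelogram $K=[0,\e]\times\{0\}+[0,1]\cdot(N,1)$ with $N=1/\e^2$ one checks $\mu_2(K)=1/\e$ and $\vol(K)=\e$, so $\mu_2(K)\vol(K)=1$, while $\vol_1(K|e_2^\perp)/2\approx N/2$ is arbitrarily large. So no repair of the cone estimate can save the reduction; the failure is structural, not technical.

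The fix is to choose the direction of the splitting adaptively rather than taking $e_n$, and this is what the paper does. Jarn\'{i}k's inequality gives $\lambda_1(\D K)\leq 2\mu_n(K)$ (in the paper's normalization, $\mu_n(K)\geq\lambda_1(\D K)$ with $\lambda_i(K)=\lambda_i(\tfrac12\D K)$), hence a lattice vector $z\in\mu_n(K)\D K\cap\Z^n\setminus\{0\}$ and two points $x,y\in K$ with $x-y=z/\mu_n(K)$. This produces an \emph{actual chord} of $K$ in the lattice direction $z$ of normalized length $\geq\det(\Z^n\cap\lin\{z\})/\mu_n(K)$ — precisely the quantity your width-based step fails to deliver. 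One then replaces your cone comparison by the Rogers--Shephard inequality $\vol_1(K\cap L)\vol_{n-1}(K|L_z^\perp)\leq n\vol(K)$ for the line $L$ through $x$ and $y$, applies \cref{lem_mu_geq_mu_proj} and induction to $K|L_z^\perp$ exactly as in your outline, and uses $\det(\Z^n\cap L_z)\det(\Z^n|L_z^\perp)=1$. Your base case, your use of \cref{lem_mu_geq_mu_proj}, and the inductive architecture are all sound; only the mechanism for extracting the factor $\mu_n(K)\vol(K)\geq\vol_{n-1}(\cdot)/n$ has to be replaced by the Jarn\'{i}k-plus-Rogers--Shephard step in a lattice direction determined by $K$ itself.
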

\begin{proof}
Recall that $\D K=K-K$ denotes the difference body of~$K$.
From Jarn\'{i}k's inequality~\cite{jarnik1941zwei,kannanlovasz1988covering} we know that $\mu_n(K)\geq\lambda_n(\D K) \geq \lambda_1(\D K)$.
In particular, there exists a vector $z\in\mu_n(K)\D K\cap\Z^n\setminus\{0\}$.
This means that there are points $x,y\in K$ such that $z=\mu_n(K)(x-y)$.
Hence, putting $L_z=\lin\{z\}$, the line $L=y+L_z$ passing through~$x$ and~$y$ satisfies
\[\vol_1(K\cap L)\geq\|x-y\|=\frac{\|z\|}{\mu_n(K)}\geq\frac{\det(\Z^n\cap L_z)}{\mu_n(K)}.\]
By \cref{lem_mu_geq_mu_proj}, it holds $\mu_j(K)\geq\mu_j(K|L_z^\perp,\Z^n|L_z^\perp)$, for every $j=1,\ldots,n-1$.
Based on these observations and the inequality~\eqref{eqnRogersShephard} of Rogers \& Shephard with respect to the line $L$, we obtain inductively, that
\begin{align*}
\mu_1(K)&\cdot\ldots\cdot\mu_n(K)\vol(K)\\
&\geq\frac{1}{n}\frac{\mu_1(K)\cdot\ldots\cdot\mu_{n-1}(K)\vol_{n-1}(K|L^\perp)}{\det(\Z^n|L_z^\perp)}\frac{\mu_n(K)\vol_1(K\cap L)}{\det(\Z^n\cap L_z)}\\
&\geq\frac{1}{n}\frac{\mu_1(K|L_z^\perp,\Z^n|L_z^\perp)\cdot\ldots\cdot\mu_{n-1}(K|L_z^\perp,\Z^n|L_z^\perp)\vol_{n-1}(K|L_z^\perp)}{\det(\Z^n|L_z^\perp)}\\
&\geq\frac1{n}\frac1{(n-1)!}=\frac1{n!}.
\end{align*}
We also used the identity $\det(\Z^n\cap L_z)\det(\Z^n|L_z^\perp)=\det(\Z^n)=1$ again.
\end{proof}

\begin{remark}
Based on \cref{prop_first_bounds_mu_j}~\romannumeral1) (cf.~\cref{remRoughBoundsMuI}) one can slightly improve the lower bound $1/n!$ in \cref{thm_mu_prod_weak_bound} to roughly $(1/n!)^{(n-k)/n}$, for any $o$-symmetric $K\in\K^n_o$ and any fixed constant $k\in\N$.

This is meaningful because it shows that (for $o$-symmetric convex bodies) Makai Jr.'s conjecture \cref{conjMakai} is independent from \cref{probMainProblem}~\eqref{eqnMainProblem2}.
\end{remark}

The previous bound improves drastically on the family of standard unconditional bodies.
Indeed, we prove a sharp bound on this class and illustrate that there are infinitely many non-equivalent extremal examples.

\begin{theorem}\label{thm_mu_prod_vol_uncond}
Let $K\in\K^n_o$ be a standard unconditional body. Then,
\[\mu_1(K)\cdot\ldots\cdot\mu_n(K)\vol(K)\geq1.\]
Equality holds if and only if $K=\conv\left\{\frac1{2\mu_1(K)}P_{n,1},\ldots,\frac1{2\mu_n(K)}P_{n,n}\right\}$.
\end{theorem}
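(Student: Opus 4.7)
The plan is to use the inclusion $\alpha_i P_{n,i}\subseteq K$ for $\alpha_i:=1/(2\mu_i(K))$, already established inside the proof of \cref{prop_mu_j_vol_uncondis_standard}, to dissect a subregion of $K$ into $2^n n!$ interior-disjoint simplices of equal volume, one per signed permutation in $S_n\times\{\pm 1\}^n$, whose volumes sum to exactly $\prod_i(1/\mu_i(K))$. Note that the monotonicity $\mu_1(K)\leq\cdots\leq\mu_n(K)$ yields $\alpha_1\geq\cdots\geq\alpha_n>0$.

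First I would consider the base simplex
\[S:=\conv\{0,\,\alpha_1 e_1,\,\alpha_2(e_1+e_2),\,\ldots,\,\alpha_n(e_1+\cdots+e_n)\}.\]
Each of its vertices is a vertex of $\alpha_i P_{n,i}\subseteq K$, so $S\subseteq K$; after subtracting consecutive columns of the associated matrix one obtains a diagonal matrix with entries $\alpha_1,\ldots,\alpha_n$, giving $\vol(S)=\frac{1}{n!}\prod_i\alpha_i$. The coordinates of every vertex of $S$ form a weakly decreasing non-negative sequence, so $S$ sits inside the closed chamber $C_+:=\{x:x_1\geq\cdots\geq x_n\geq 0\}$.

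Next, for every signed permutation $(\sigma,\epsilon)\in S_n\times\{\pm 1\}^n$ I form $S_{\sigma,\epsilon}$ as the image of $S$ under the map $e_j\mapsto\epsilon_j e_{\sigma(j)}$. Its vertices take the shape $\alpha_i\sum_{k=1}^i\epsilon_{\sigma(k)}e_{\sigma(k)}$, each of which is a vertex of $\alpha_i P_{n,i}$ (using the permutation-invariance of $P_{n,i}$ combined with the unconditionality of $K$). Hence $S_{\sigma,\epsilon}\subseteq K$, has the same volume $\frac{1}{n!}\prod_i\alpha_i$, and lies inside the chamber $\{x:\epsilon_{\sigma(1)}x_{\sigma(1)}\geq\cdots\geq\epsilon_{\sigma(n)}x_{\sigma(n)}\geq 0\}$. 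These $2^n n!$ chambers partition $\R^n$ with pairwise interior-disjoint closures, so
\[\vol(K)\;\geq\;\sum_{(\sigma,\epsilon)}\vol(S_{\sigma,\epsilon})\;=\;2^n n!\cdot\tfrac{1}{n!}\prod_i\alpha_i\;=\;\prod_i\frac{1}{\mu_i(K)},\]
which rearranges to the claimed inequality.

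For the equality characterization, equality forces $K$ to equal the closed union $\bigcup_{\sigma,\epsilon}S_{\sigma,\epsilon}$ (both are closed, one is contained in the other, and their volumes agree). Since $K$ is convex and the vertex set of this union is precisely $\{0\}\cup\bigcup_i\mathrm{vert}(\alpha_i P_{n,i})$, one concludes $K=\conv\{\alpha_i P_{n,i}:i\in[n]\}$. Conversely, if $K$ has this form with the self-consistent scalings $\alpha_i=1/(2\mu_i(K))$, one checks that across every wall $x_{\sigma(k)}=x_{\sigma(k+1)}$ (and analogously across the sign-flip wall $x_{\sigma(n)}=0$) the two neighboring simplices share the common facet $\conv\{0,v_1,\ldots,v_{k-1},v_{k+1},\ldots,v_n\}$, so their union is a convex polytope coinciding with $\conv\{\alpha_i P_{n,i}\}$ and the volume bound is attained. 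I expect the main obstacle to be precisely this converse: one has to verify that self-consistency of the $\alpha_i$'s really is strong enough to prevent the simplex-union from being non-convex (informally, that the sequence $i\mapsto\mu_i(K)$ is forced to be concave), since non-concavity would create a ``notch'' between adjacent chambers and destroy the sharpness of the bound.
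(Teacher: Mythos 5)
Your proof of the inequality itself is correct and is essentially the paper's argument in different clothing: the paper forms $Q_K=\conv\{\frac1{2\mu_i(K)}P_{n,i}:i\in[n]\}\subseteq K$ and decomposes it into the same $2^nn!$ simplices, certifying their interior-disjointness via the barycentric subdivision of the facets of the cube rather than via the chambers of the hyperoctahedral group; your chamber argument is, if anything, a cleaner way to see that the volumes add up. Your derivation of the ``only if'' direction of the equality case (equality forces $K$ to coincide with the union of the simplices, hence with its convex hull $\conv\{\frac1{2\mu_i(K)}P_{n,i}:i\in[n]\}$) is also sound.

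The ``if'' direction, which you leave open, is a genuine gap --- and your suspicion about it is exactly right. Self-consistency of the scalings $\alpha_i=1/(2\mu_i(K))$ does \emph{not} force the union of the $2^nn!$ simplices to exhaust $K=\conv\{\alpha_iP_{n,i}:i\in[n]\}$: for that one needs $\conv\{\alpha_1e_1,\alpha_2(e_1+e_2),\ldots,\alpha_n{\bf 1}\}$ to lie in a supporting hyperplane of $K$, which amounts to the sequence $0,\mu_1(K),\ldots,\mu_n(K)$ being concave, and this can fail. Concretely, $K=P_{3,2}$ has $\mu_1=\mu_2=\frac12$ and $\mu_3=\frac34$ by \cref{prop_mu_j_vol_P_ni}, so that $\conv\{\frac1{2\mu_1}P_{3,1},\frac1{2\mu_2}P_{3,2},\frac1{2\mu_3}P_{3,3}\}=\conv\{P_{3,1},P_{3,2},\frac23C_3\}=P_{3,2}=K$, yet
\[\mu_1(K)\mu_2(K)\mu_3(K)\vol(K)=\frac12\cdot\frac12\cdot\frac34\cdot\frac{20}{3}=\frac54>1;\]
here the midpoint of $e_1$ and $\frac23{\bf 1}$ lies in $\inter(P_{3,2})$, so the simplex $S_{\mathrm{id},+}$ is a proper subset of $P_{3,2}\cap C_+$ and volume is lost. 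Thus the stated condition is necessary but not sufficient for equality, and no argument can close this direction without adding the concavity of $i\mapsto\mu_i(K)$ (or an equivalent hypothesis) to the characterization. You are in good company: the paper's own proof asserts at this point that the boundary of $Q_K$ is triangulated into $2^nn!$ congruent copies of $S$, which is precisely the unjustified convexity claim you flagged, and which the example above shows to be false in general.
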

\begin{proof}
For the sake of brevity, for any $i\in[n]$, we write $\mu_i=\mu_i(K)$ and $\bar P_{n,i}=\frac12 P_{n,i}$.
From the proof of \cref{prop_mu_j_vol_uncondis_standard}, we know that $\bar P_{n,i}\subseteq\mu_i\cdot K$, for $i\in[n]$.
Therefore, the body $Q_K=\conv\left\{\frac1{\mu_1}\bar P_{n,1},\ldots,\frac1{\mu_n}\bar P_{n,n}\right\}$ is contained in $K$ and it suffices to prove that $\vol(Q_K)=\left(\mu_1\cdot\ldots\cdot\mu_n\right)^{-1}$.

To this end, we first observe that if for some $i,j\in[n], i\neq j$, we would have $\frac1{\mu_i}\bar P_{n,i}\subseteq\inter\big(\frac1{\mu_j}\bar P_{n,j}\big)\subseteq\inter(K)$, then by compactness of the involved bodies, there would be an $\varepsilon>0$ such that $\frac1{\mu_i-\varepsilon}\bar P_{n,i}\subseteq K$, contradicting the minimality of $\mu_i$.
This means, that for any $i\in[n]$, all the vertices of $\frac1{\mu_i}\bar P_{n,i}$ lie in the boundary of~$Q_K$.
Using the vertex description $P_{n,i}=\conv\{\pm e_{j_1}\pm\ldots\pm e_{j_i}:1\leq j_1<\ldots<j_i\leq n\}$, we see that the rays emanating from the origin and passing through the vertices of the polytopes $P_{n,i}$, $i\in[n]$, intersect a facet of the cube $C_n=P_{n,n}$ precisely in the barycenters of its faces, and hence induce a barycentric subdivision of that facet (see~\cite{ziegler1995lectures} for basic notions on convex polytopes).
More precisely, a ray passing through a vertex of $P_{n,i}$ meets a facet of $P_{n,n}$ in the barycenter of one of its $(n-i)$-dimensional faces.
By the symmetry of our construction and the involved bodies, all simplices in the barycentric subdivision have the same volume.
Moreover, there are $(n-1)!\,2^{n-1}$ simplices in the barycentric subdivision of each facet of the cube as can be seen, for example, by counting the number of simplices containing a fixed vertex.
All this shows, that the boundary of $Q_K$ can be triangulated into $n!\,2^n$ simplices all being congruent to $S=\conv\left\{\frac1{2\mu_1}e_1,\frac1{2\mu_2}(e_1+e_2),\ldots,\frac1{2\mu_n}(e_1+\ldots+e_n)\right\}$, and thus
\[\vol(Q_K)=n!\,2^n\vol(\conv\{0,S\})=\frac1{\mu_1\cdot\ldots\cdot\mu_n},\]
as desired.

Since the only step where we could lose some volume is in the inclusion $Q_K\subseteq K$, we immediately see that equality holds if and only if $K=Q_K$.
\end{proof}

\begin{corollary}\label{cor_mu_prod_vol_uncond_intersections}
Let $S\in\K^n$ be such that $S=K\cap\R^n_{\geq0}$ for some standard unconditional body $K\in\K^n_o$.
Then, \[\mu_1(S)\cdot\ldots\cdot\mu_n(S)\vol(S)\geq1,\]
with equality if and only if $S=\conv\left\{\frac1{\mu_1(S)}P_{n,1},\ldots,\frac1{\mu_n(S)}P_{n,n}\right\}\cap\R^n_{\geq0}$.
\end{corollary}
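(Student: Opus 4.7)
The plan is to reduce to \cref{thm_mu_prod_vol_uncond} by passing to the standard unconditional hull
\[K := \{x\in\R^n : (|x_1|,\ldots,|x_n|)\in S\},\]
which is standard unconditional, satisfies $S = K\cap\R^n_{\geq 0}$, and has $\vol(K) = 2^n\vol(S)$. The key auxiliary object will be $\hat K := \conv\{\mu_i(S)^{-1}P_{n,i} : i\in[n]\}$, again standard unconditional, which I will sandwich between $\hat K\subseteq K$ and $\hat K\cap\R^n_{\geq 0}\subseteq S$.

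The crucial step, and the one I expect to be the main obstacle, is the vertex inclusion $\mu_i(S)^{-1}e_J \in S$ for every $J\subseteq[n]$ with $|J|=i$, where $e_J := \sum_{j\in J}e_j$. I would invoke the Kannan--Lov\'asz identity~\eqref{eqn_KL_max_mu} on the $i$-dimensional coordinate subspace $L_J := \lin\{e_j : j\in J\}$; using unconditionality of $K$ one checks $S|L_J = S\cap L_J$, so that $\mu_i(S\cap L_J,\Z^i)\leq\mu_i(S)$. Applying the resulting covering condition to the point $(1-\varepsilon)e_J\in L_J$ produces a decomposition $(1-\varepsilon)e_J = \mu_i(S)\,s_\varepsilon + z_\varepsilon$ with $s_\varepsilon\in S\cap L_J$ and $z_\varepsilon\in\Z^i$; positivity of $s_\varepsilon$ and integrality of $z_\varepsilon$ force $z_{\varepsilon,j}\leq 0$ and hence $s_{\varepsilon,j}\geq(1-\varepsilon)/\mu_i(S)$ for every $j\in J$. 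Sending $\varepsilon\to 0$, compactness produces a limit $s^*\in S\cap L_J$ with $s^*_j\geq\mu_i(S)^{-1}$; since $K\cap L_J$ is standard unconditional, convexity then pulls the entire box $\prod_{j\in J}[-s^*_j,s^*_j]$ into $K\cap L_J$, and intersecting with the positive orthant finishes the claim.

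With this inclusion in hand, the remainder is bookkeeping. Unconditionality of $K$ propagates $\mu_i(S)^{-1}e_J\in S$ to all reflected vertices of $\mu_i(S)^{-1}P_{n,i}$, so $\mu_i(S)^{-1}P_{n,i}\subseteq K$ and $\hat K\subseteq K$. For the covering-minimum estimate, $\hat K\supseteq\mu_i(S)^{-1}P_{n,i}$ gives $(\mu_i(S)/2)\hat K+\Z^n\supseteq \tfrac12 P_{n,i}+\Z^n$, and since $\mu_i(P_{n,i})=\tfrac12$ by \cref{prop_mu_j_vol_P_ni}~\romannumeral1) the latter meets every $(n-i)$-dimensional affine subspace; hence $\mu_i(\hat K)\leq\mu_i(S)/2$. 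Applying \cref{thm_mu_prod_vol_uncond} to the standard unconditional body $\hat K$ then yields
\[\vol(S)\geq \vol(\hat K\cap\R^n_{\geq 0})=2^{-n}\vol(\hat K)\geq \frac{2^{-n}}{\prod_{i=1}^n\mu_i(\hat K)}\geq \frac{1}{\prod_{i=1}^n\mu_i(S)}.\]
Equality throughout forces $S=\hat K\cap\R^n_{\geq 0}$, $\mu_i(\hat K)=\mu_i(S)/2$ for all $i$, and equality in \cref{thm_mu_prod_vol_uncond} for $\hat K$; the latter's characterisation identifies $\hat K = \conv\{\mu_i(S)^{-1}P_{n,i}\}_{i=1}^n$, whence $S=\conv\{\mu_i(S)^{-1}P_{n,i}\}_{i=1}^n\cap\R^n_{\geq 0}$ as claimed.
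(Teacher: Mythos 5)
Your proposal is correct and follows essentially the same route as the paper: both reduce to \cref{thm_mu_prod_vol_uncond} via the unconditional hull $K$ of $S$, using $\vol(K)=2^n\vol(S)$ together with the observation that a dilate of $S$ covers a coordinate subspace precisely when it contains the corresponding unit cube (versus the half-cube for $K$), which is exactly the content of the paper's identity $\mu_i(S)=2\mu_i(K)$. Your detour through the auxiliary body $\hat K$ and the explicit $\varepsilon$-decomposition is just a more laborious packaging of that same factor-of-two relation.
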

\begin{proof}
By the unconditionality of~$K$, we get that $2^n\vol(S)=\vol(K)$.
In order for a dilate of~$S$ to induce a lattice covering of the whole space it needs to contain the unit cube $[0,1]^n$.
On the other hand, for $K$ it suffices to cover half of it, that is, $[0,1/2]^n$.
By similar considerations as in the proof of \cref{prop_mu_j_vol_P_ni} this shows that $\mu_i(S)=2\mu_i(K)$, for all $i\in[n]$.
In view of these identities, the claimed inequality together with its equality case characterization follow from \cref{thm_mu_prod_vol_uncond}.
\end{proof}

The obtained constants in \cref{thm_mu_prod_weak_bound} and \cref{thm_mu_prod_vol_uncond} are of course far off from each other.
We conjecture that the truth lies somehow in between and that the biggest possible lower bound on the covering product decreases exponentially with the dimension.

\begin{conjecture}\label{conj_mu_prod}
For every $K\in\K^n$, we have
\[\mu_1(K)\cdot\ldots\cdot\mu_n(K)\vol(K)\geq\frac{n+1}{2^n},\]
with equality, for example, for $K=T_n=\conv\{e_1,\ldots,e_n,-{\bf 1}\}$.
\end{conjecture}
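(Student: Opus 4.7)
My plan is an induction on dimension~$n$. The base case $n=1$ is immediate from $\mu_1([a,b])\vol([a,b])=1=2/2^1$, and the case $n=2$ is exactly Schnell's \cref{thmSchnell}, whose sharp constant $3/4$ equals $(n+1)/2^n$. To align the induction with the conjectured extremizer, I would first verify that $\mu_i(T_n)=i/2$ for all $i\in[n]$, so that $\prod_{i=1}^n\mu_i(T_n)\cdot\vol(T_n)=\tfrac{n!}{2^n}\cdot\tfrac{n+1}{n!}=\tfrac{n+1}{2^n}$, and record the structural feature that the long diagonal of $T_n$ from $-\mathbf{1}$ to the centroid of the opposite facet points in the lattice direction~$\mathbf{1}$.

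For the inductive step I would replicate the strategy of \cref{thm_mu_prod_weak_bound}. Use Jarn\'{\i}k's inequality to pick a short lattice vector $z\in\mu_n(K)\D K\cap(\Z^n\setminus\{0\})$ and set $L_z=\lin\{z\}$, producing a chord $K\cap L$ of length at least $\det(\Z^n\cap L_z)/\mu_n(K)$ parallel to~$z$. Invoke \cref{lem_mu_geq_mu_proj} to bound $\mu_j(K)\geq\mu_j(K|L_z^\perp,\Z^n|L_z^\perp)$ for $j<n$, apply the Rogers--Shephard inequality with constant $n$ on the pair (chord, projection), and combine with the inductive hypothesis $\mu_1\cdots\mu_{n-1}\vol_{n-1}\geq\tfrac{n}{2^{n-1}}\det$ in the rank-$(n-1)$ lattice $\Z^n|L_z^\perp$. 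Using $\det(\Z^n\cap L_z)\det(\Z^n|L_z^\perp)=1$, this chain of inequalities yields
\[
\mu_1(K)\cdots\mu_n(K)\vol(K)\ \geq\ \frac{1}{n}\cdot\frac{n}{2^{n-1}}\cdot\det(\Z^n\cap L_z)\det(\Z^n|L_z^\perp)\ =\ \frac{1}{2^{n-1}},
\]
which is strictly weaker than the target $(n+1)/2^n$ once $n\geq 2$.

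The hard part is to recover the missing factor of $(n+1)/2$. The Rogers--Shephard inequality is tight exactly for simplices with $L$ running from a vertex to the barycenter of the opposite facet, a configuration which $T_n$ itself realises along the lattice direction $\mathbf{1}$; so one might hope to steer the chord direction $L_z$ towards such an extremal configuration, or conversely to win back a stability-type factor when $K$ is far from a simplex in that direction. An alternative route is to peel off all dimensions simultaneously via a Loomis--Whitney or Meyer-type slicing over coordinate sections, in the spirit of the proof of \cref{prop_mu_j_vol_uncondis_standard}, thereby avoiding the loss inherent in Rogers--Shephard. I expect the main obstacle to be that, unlike the $o$-symmetric setting in which the first covering minimum controls almost everything, the covering product is minimized by a strictly non-symmetric body; hence any reduction via the difference body $\D K$ or via symmetrisation must be carried out with enough care to preserve precisely the factor $(n+1)/2$ that separates the symmetric and non-symmetric worlds.
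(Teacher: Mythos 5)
The statement you are asked to prove is stated in the paper as \cref{conj_mu_prod}, i.e.\ as an \emph{open conjecture}: the authors do not prove it, and they offer only supporting evidence, namely the planar case (\cref{thmSchnell}), the weaker general bound $1/n!$ of \cref{thm_mu_prod_weak_bound}, and \cref{prop_mu_j_simplex}, which shows that the covering product of $T_n$ decays exponentially. So there is no proof in the paper to compare yours against, and your proposal does not close the gap either --- as you yourself concede. Your inductive scheme is exactly the argument of \cref{thm_mu_prod_weak_bound} with the inductive hypothesis upgraded from $1/(n-1)!$ to $n/2^{n-1}$; the factor $\binom{n}{1}=n$ lost in the Rogers--Shephard step \eqref{eqnRogersShephard} exactly cancels the gain, leaving $1/2^{n-1}=2/2^n<(n+1)/2^n$ for all $n\geq2$. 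The two repairs you sketch (a stability version of Rogers--Shephard steering the chord toward the direction ${\bf 1}$, or a Meyer/Loomis--Whitney slicing) are not carried out, and neither is known to work; in particular the slicing route runs into the fact that the conjectured extremizer is not unconditional, while \cref{thm_mu_prod_vol_uncond} shows the unconditional optimum is the much larger constant $1$.

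A second, smaller gap: even the asserted equality case is not something you can simply ``verify.'' The paper proves $\mu_n(T_n)=n/2$ (\cref{prop_mu_j_simplex}) and, via projection onto coordinate subspaces, $\mu_i(T_n)\geq\mu_i(T_i)=i/2$; but the matching upper bound $\mu_i(T_n)\leq i/2$ for $1<i<n$ is itself only conjectured in the remark following \cref{prop_mu_j_simplex} (the general bound \eqref{eqnCoverMinsSLatticeimplices} only gives $\mu_i(T_n)\leq i$). The inequality $\mu_i(T_n)\geq i/2$ does at least guarantee that $T_n$ satisfies the conjectured lower bound, but establishing that it attains it with equality would require computing all intermediate covering minima of $T_n$, which the paper describes as a highly non-trivial task. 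In short: both the inequality and its equality case remain open, and your outline correctly identifies, but does not overcome, the obstruction.
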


For $n=2$, the above conjecture reduces to Schnell's inequality (\cref{thmSchnell}).
In analogy to Schnell's result, we expect that there are many more extremal examples minimizing the covering product, and that there are also $o$-symmetric bodies among them.
These extremal examples, even for $n=3$, can be seen as interesting variants of the so-called \emph{parallelohedra}, which are convex bodies that are extremal in~\eqref{eqn_mu_n} (cf.~\cite[Sect.~32]{gruber2007convex}).

It turns out that even for the simplex~$T_n$, the determination of the whole sequence of covering minima is a highly non-trivial task.
A general upper bound on the covering minima for simplices with integral vertices is the following.
For any full-dimensional simplex $T=\conv\{v_0,v_1,\ldots,v_n\}$, with $v_j\in\Z^n$, we have
\begin{align}
\mu_i(T)\leq \mu_i(S_{\bf 1})=i,\quad\text{for all }i\in[n],\label{eqnCoverMinsSLatticeimplices}
\end{align}
where $S_{\bf 1}=\conv\{0,e_1,\ldots,e_n\}$ is the \emph{standard simplex}.
Indeed, if we let $V\in\Z^{n\times n}$ be the matrix with columns $v_i-v_0$, $i\in[n]$, then $T=VS_{\bf 1}+v_0$.
Moreover, $V\Z^n\subseteq\Z^n$, because $V$ has only integral entries, and thus $VS_{\bf 1}+V\Z^n\subseteq VS_{\bf 1}+\Z^n$.
The definition of the covering minima then implies that $\mu_i(S_{\bf 1})=\mu_i(VS_{\bf 1},V\Z^n)\geq\mu_i(VS_{\bf 1})=\mu_i(T)$.
The fact that $\mu_i(S_{\bf 1})=i$, for all $i\in[n]$, is easy to see, for instance, by similar arguments as in the proof of \cref{prop_mu_j_vol_P_ni}.

In the sequel, we concentrate on determining the exact value of $\mu_n(T_n)$ in arbitrary dimension.
This will be enough to show that the covering product of $T_n$ decreases exponentially with~$n$, and thus motivates \cref{conj_mu_prod}.

\begin{proposition}\label{prop_mu_j_simplex}
Let $T_n=\conv\{e_1,\ldots,e_n,-{\bf 1}\}$. Then,
\begin{enumerate}[i)]
 \item $\mu_n(T_n)=\frac{n}{2}$, and
 \item $\mu_1(T_n)\cdot\ldots\cdot\mu_n(T_n)\vol(T_n)\leq\frac{n+1}{\left(16/15\right)^{7n/15}}\approx\frac{n+1}{1.03057^n}$, for every $n\geq6$.
\end{enumerate}
\end{proposition}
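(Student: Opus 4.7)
The plan is to handle (i) by matching lower and upper bounds on $\mu_n(T_n)$ using the explicit facet description $T_n = \{x \in \R^n : a_i^\intercal x \leq 1 \text{ for } i \in [n], \, \mathbf{1}^\intercal x \leq 1\}$ with $a_i = \mathbf{1} - (n+1)e_i$, which yields the gauge $\|y\|_{T_n} = \max\bigl(\mathbf{1}^\intercal y, \max_i [\mathbf{1}^\intercal y - (n+1)y_i]\bigr)$ for $y \neq 0$. Then (ii) will follow by combining (i) with the elementary estimate $\mu_i(T_n) \leq i$ from \eqref{eqnCoverMinsSLatticeimplices} and a routine numerical check.

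For the lower bound in (i), I take the candidate deep hole $x^\star = (1,2,\ldots,n)^\intercal/(n+1)$. A direct computation gives $\mathbf{1}^\intercal x^\star = n/2$ and $a_i^\intercal x^\star = n/2 - i$, so $\|x^\star\|_{T_n} = n/2$. To prove $\mu_n(T_n) \geq n/2$ I need to show no $z \in \Z^n$ satisfies $\|x^\star - z\|_{T_n} < n/2$. Assuming the contrary and setting $s := \mathbf{1}^\intercal z$, the gauge inequalities rewrite as $s \geq 1$ together with $z_i < (s+i)/(n+1)$ for every $i \in [n]$. Writing $s + i = q_i(n+1) + r_i$ with $r_i \in \{0, 1, \ldots, n\}$, the residues $\{r_i : i \in [n]\}$ exhaust exactly $\{0, 1, \ldots, n\} \setminus \{s \bmod (n+1)\}$; combined with the integer bound $z_i \leq q_i - [r_i = 0]$, summation yields $s \leq \sum_i q_i - [s \not\equiv 0 \pmod{n+1}]$. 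A short case distinction on whether $(n+1) \mid s$ then forces $s \leq 0$, contradicting $s \geq 1$.

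For the upper bound $\mu_n(T_n) \leq n/2$, by lattice translation reduce to $x \in [0,1)^n$. If $\mathbf{1}^\intercal x \leq n/2$ then $z = 0$ works since $\|x\|_{T_n} = \mathbf{1}^\intercal x \leq n/2$. Otherwise, for each $m \in [n]$ let $z_m \in \{0,1\}^n$ be the indicator vector of the $m$ largest coordinates of $x$; a direct computation gives $\|x - z_m\|_{T_n} = \mathbf{1}^\intercal x + n + 1 - m - (n+1) x_{(n-m+1)}$, where $x_{(j)}$ denotes the $j$-th smallest coordinate of $x$. The key observation is the identity $\sum_{m=1}^n \bigl(m + (n+1)x_{(n-m+1)}\bigr) = (n+1)(n/2 + \mathbf{1}^\intercal x)$; dividing by $n$ and comparing with the threshold $\mathbf{1}^\intercal x + n/2 + 1$, an averaging argument shows that some $m \in [n]$ satisfies $m + (n+1)x_{(n-m+1)} \geq \mathbf{1}^\intercal x + n/2 + 1$ precisely when $\mathbf{1}^\intercal x \geq n/2$, which is equivalent to $\|x - z_m\|_{T_n} \leq n/2$.

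For (ii), combine $\mu_i(T_n) \leq i$ from \eqref{eqnCoverMinsSLatticeimplices} (applicable since $T_n$ has integral vertices) with $\mu_i(T_n) \leq \mu_n(T_n) = n/2$ from (i) and the monotonicity of the covering minima, obtaining $\mu_i(T_n) \leq \min(i, n/2)$ for every $i \in [n]$. Consequently $\prod_{i=1}^n \mu_i(T_n) \leq \lfloor n/2 \rfloor!\,(n/2)^{\lceil n/2 \rceil}$, and multiplying by $\vol(T_n) = (n+1)/n!$ bounds the covering product by $(n+1)\lfloor n/2 \rfloor!\,(n/2)^{\lceil n/2 \rceil}/n!$. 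This decays like $(n+1)\cdot (e/4)^{n/2}/\Theta(\sqrt{n}) \sim (n+1)\cdot 0.82^n$, exponentially smaller than the claimed $(n+1)(15/16)^{7n/15} \sim (n+1)\cdot 0.97^n$; the residual inequality $\lfloor n/2 \rfloor!\,(n/2)^{\lceil n/2 \rceil}\,(16/15)^{7n/15} \leq n!$ is verified at the base case $n=6$ by hand and propagated by induction, since the left-side consecutive ratio $(1 + 1/(2m))^m/2$ stays below $\sqrt{e}/2 \approx 0.82$, which is strictly less than the right-side ratio $(16/15)^{7/15} \approx 1.03$. The main obstacle is the congruence bookkeeping in the deep-hole argument of (i); once that is in place, the upper bound and (ii) follow from routine estimates.
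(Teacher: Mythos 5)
Your proof is correct, but it takes a genuinely different route from the paper's, most notably in part~i). The paper transforms $T_n$ into $(n+1)S_{\bf 1}-{\bf 1}$, invokes the Marklof--Str\"ombergsson correspondence (\cref{thmDiameterCoveringRadius}) between covering radii of simplices and diameters of quotient lattice graphs, and then computes $\diam_{\bf 1}(\LG_n^+/\Lambda_n)=\binom{n}{2}$ via the residue lemma (\cref{lemNumberTheory}). You instead work directly with the facet description $T_n=\{x:{\bf 1}^\intercal x\leq1,\ ({\bf 1}-(n+1)e_i)^\intercal x\leq1\}$: your deep hole $x^\star=(1,\ldots,n)^\intercal/(n+1)$ is the metric avatar of the paper's extremal vertex $w=(2,\ldots,n)$, and your residue bookkeeping with $r_i=[s+i]_{n+1}$ plays the role of $\sigma_w(r)$; the upper bound via indicator vectors of the $m$ largest coordinates plus averaging replaces the paper's contradiction argument against \eqref{eqnConditionDiameter}. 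Your version is self-contained (no external theorem needed) and arguably more transparent; the paper's version situates the computation inside a general machinery for simplices. I checked your congruence argument: the residues $\{[s+i]_{n+1}:i\in[n]\}$ do exhaust $\{0,\ldots,n\}\setminus\{[s]_{n+1}\}$, the summation gives $s\leq\frac{ns+[s]_{n+1}}{n+1}-[s\not\equiv0]$, and both cases force $s\leq0$; the averaging identity $\sum_m(m+(n+1)x_{(n-m+1)})=(n+1)(n/2+{\bf 1}^\intercal x)$ is also correct and the threshold comparison works exactly when ${\bf 1}^\intercal x\geq n/2$. For part~ii), both you and the paper use only $\mu_j(T_n)\leq\min(j,n/2)$ (from \eqref{eqnCoverMinsSLatticeimplices} and monotonicity); the paper relaxes this to $\mu_j\leq jn/(2i)$ for $j\geq i$ with $i=\lfloor 7n/10\rfloor$ to get the closed form $(n+1)/((2p)^{1-p})^n$, whereas your $\lfloor n/2\rfloor!\,(n/2)^{\lceil n/2\rceil}/n!\sim(e/4)^{n/2}$ is strictly stronger and implies the stated bound. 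One small imprecision: in your induction the odd-to-even step has ratio $\tfrac12(1+\tfrac1{2m+1})^{m+1}\leq\tfrac12e^{2/3}\approx0.89$ rather than $\sqrt{e}/2\approx0.82$, but this is still comfortably below $(16/15)^{-7/15}\approx0.97$, so the induction closes.
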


\begin{remark}
For the $i$-dimensional coordinate subspace $L_i=\{e_1,\ldots,e_i\}$ holds $T_n|L_i=T_i\times\{0\}^{n-i}$ and $\Z^n|L_i=\Z^i\times\{0\}^{n-i}$.
Hence by~\eqref{eqn_KL_max_mu}, we have $\mu_i(T_n)\geq\mu_i(T_i)=i/2$.
We conjecture that this is actually an identity for every for $i\in[n]$, which, in view of $\vol(T_n)=(n+1)/n!$, explains the claimed lower bound in \cref{conj_mu_prod}.
\end{remark}

The proof of \cref{prop_mu_j_simplex} needs a bit of preparation.
The covering radius of simplices has appeared in various contexts in the literature.
For instance, a celebrated result of Kannan~\cite{kannan1992lattice} establishes a relation to the Frobenius coin problem, and more recently, Marklof \& Str\"ombergsson~\cite{marklofstrom2013diameters} (cf.~\cite{aliev2015onthe}) made a connection to diameters of so-called quotient lattice graphs.
The latter interpretation suits our purposes well, so we introduce the necessary notation following~\cite{marklofstrom2013diameters}.
We use basic concepts from graph theory, for which the reader may consult the textbook of Diestel~\cite{diestel2010graph}.

Let $\LG_n^+$ be the \emph{standard lattice graph}, that is, the directed graph with vertex set $\Z^n$ and a directed edge $(x,x+e_i)$, for every $x\in\Z^n$ and $i\in[n]$.
For a sublattice $\Lambda\subseteq\Z^n$ the \emph{quotient lattice graph} $\LG_n^+/\Lambda$ is defined as the directed graph with vertex set $\Z^n/\Lambda$ and edges $(x+\Lambda,x+e_i+\Lambda)$, for $x\in\Z^n$ and $i\in[n]$.
For fixed $v\in\R^n_{>0}$, a distance in $\LG_n^+/\Lambda$ is defined by
\[\dist_v(x+\Lambda,y+\Lambda)=\min_{z\in(y-x+\Lambda)\cap\Z^n_{\geq0}}v^\intercal z,\quad\text{for any}\quad x,y\in\Z^n.\]
In other words, $\dist_v(x+\Lambda,y+\Lambda)$ is the length of the shortest directed path from $x+\Lambda$ to $y+\Lambda$ in $\LG_n^+/\Lambda$, where additionally each edge $(x+\Lambda,x+e_i+\Lambda)$ is given the weight~$v_i$, for $i\in[n]$.
The diameter of $\LG_n^+/\Lambda$ (with respect to~$v$) can now be defined as
\[\diam_v(\LG_n^+/\Lambda)=\max_{y\in\Z^n/\Lambda}\dist_v(0+\Lambda,y+\Lambda).\]
Note that, by definition, the distance $\dist_v$ is translation invariant, that is, $\dist_v(x+w+\Lambda,y+w+\Lambda)=\dist_v(x+\Lambda,y+\Lambda)$, for all $x,y,w\in\Z^n$, and hence it suffices to consider paths starting from the vertex $0+\Lambda$ in the definition of $\diam_v(\LG_n^+/\Lambda)$.
Finally, we define the simplex $S_v=\left\{x\in\R^n_{\geq0}:v^\intercal x\leq 1\right\}$.

\begin{theorem}[{\cite[Sect.~2]{marklofstrom2013diameters}}]
  \label{thmDiameterCoveringRadius}
Let $v\in\R^n_{>0}$ and let $\Lambda\subseteq\Z^n$ be a sublattice.
Then,
\[\mu_n(S_v,\Lambda)=\diam_v(\LG_n^+/\Lambda)+v_1+\ldots+v_n.\]
\end{theorem}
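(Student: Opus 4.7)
The plan is to reformulate the covering condition $\mu S_v+\Lambda=\R^n$ as an integer-programming question on cosets of $\Lambda$, which then matches the definition of the graph distance in $\LG_n^+/\Lambda$ exactly.

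Fix $\mu>0$ and let $y\in\R^n$ be arbitrary. I would decompose $y=\lfloor y\rfloor+\{y\}$ componentwise, with $\lfloor y\rfloor\in\Z^n$ and $\{y\}\in[0,1)^n$. The point $y$ lies in $\mu S_v+\Lambda$ if and only if there exists $z\in\Lambda$ with $y-z\geq 0$ componentwise and $v^\intercal(y-z)\leq\mu$. Setting $w=\lfloor y\rfloor-z\in\Z^n$, the vector $y-z=w+\{y\}$ is componentwise nonnegative if and only if $w\in\Z^n_{\geq 0}$, because $\{y\}_i\in[0,1)$ forces the integer $w_i$ to be $\geq 0$. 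Thus $y\in\mu S_v+\Lambda$ amounts to the existence of some $w\in\Z^n_{\geq 0}\cap(\lfloor y\rfloor+\Lambda)$ with $v^\intercal w\leq\mu-v^\intercal\{y\}$, and the minimum of $v^\intercal w$ over that set is, by definition and by the translation invariance of $\dist_v$, precisely $\dist_v(0+\Lambda,\lfloor y\rfloor+\Lambda)$. Hence the membership becomes
\[\dist_v(0+\Lambda,\lfloor y\rfloor+\Lambda)+v^\intercal\{y\}\leq\mu.\]

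Next I take the supremum of the left-hand side over $y\in\R^n$. Since $\lfloor y\rfloor+\Lambda$ ranges over all of $\Z^n/\Lambda$ and $\{y\}$ ranges over $[0,1)^n$ independently, the supremum splits, and setting
\[M:=\sup_{y\in\R^n}\bigl(\dist_v(0+\Lambda,\lfloor y\rfloor+\Lambda)+v^\intercal\{y\}\bigr)=\diam_v(\LG_n^+/\Lambda)+\sup_{\xi\in[0,1)^n}v^\intercal\xi=\diam_v(\LG_n^+/\Lambda)+v_1+\cdots+v_n,\]
the claim is $\mu_n(S_v,\Lambda)=M$. For every single $y\in\R^n$ the strict inequality $v^\intercal\{y\}<v_1+\cdots+v_n$ holds, hence $\dist_v(0+\Lambda,\lfloor y\rfloor+\Lambda)+v^\intercal\{y\}<M$, which shows that the choice $\mu=M$ does yield $MS_v+\Lambda=\R^n$. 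Conversely, for any $\mu<M$, I would pick a coset $w^{*}\in\Z^n/\Lambda$ realizing the diameter, take a representative $\lfloor y\rfloor\in w^{*}$, and choose $\{y\}$ close enough to $(1,\ldots,1)$ so that the sum exceeds $\mu$; the corresponding $y$ is then not covered. Combining the two bounds gives $\mu_n(S_v,\Lambda)=M$.

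The main obstacle is the half-open-cube bookkeeping at the corner $(1,\ldots,1)$: the supremum of $v^\intercal\xi$ on $[0,1)^n$ is not attained on that set, yet $\mu_n(S_v,\Lambda)$ is attained by compactness of $S_v$. One has to verify carefully that the covering inequality is nonstrict for every single $y$ (which is what makes $\mu=M$ sufficient) while simultaneously the supremum over all $y$ is only approached, not achieved, which is what prevents any smaller $\mu$ from working. Apart from this limit argument, the proof is essentially definition chasing together with the observation that translating a shortest directed path in $\LG_n^+/\Lambda$ identifies the minimum weight over nonnegative integer representatives of the coset $\lfloor y\rfloor+\Lambda$ with the graph distance from $0+\Lambda$.
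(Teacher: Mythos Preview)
The paper does not supply a proof of this theorem; it is quoted from Marklof \& Str\"ombergsson as an external result and used as a black box in the proof of \cref{prop_mu_j_simplex}. There is therefore nothing in the paper to compare your argument against.

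That said, your argument is correct and is essentially the natural proof. The reduction $y\in\mu S_v+\Lambda\Longleftrightarrow \dist_v(0+\Lambda,\lfloor y\rfloor+\Lambda)+v^\intercal\{y\}\leq\mu$ is clean, and your handling of the half-open-cube subtlety is right: every individual $y$ satisfies the inequality strictly for $\mu=M$ (since each $\{y\}_i<1$ and $v_i>0$), so $M$ covers; and because the diameter is an attained maximum over the finite quotient $\Z^n/\Lambda$, you can push $\{y\}$ towards $(1,\ldots,1)$ at a diameter-realizing coset to defeat any $\mu<M$. One tacit assumption worth making explicit is that $\Lambda$ has full rank in $\Z^n$, so that $\Z^n/\Lambda$ is finite and the diameter is genuinely a maximum; this is implicit in the paper's setting as well, since otherwise $\mu_n(S_v,\Lambda)=\infty$.
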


Before we can proceed to prove \cref{prop_mu_j_simplex}, we need an auxiliary statement from elementary number theory.
For $k\in\Z$ and $m\in\N$, we let $[k]_m$ be the representative of~$k$ modulo $m$ that lies in $\{0,1,\ldots,m-1\}$.
More precisely, $[k]_m=k-m\lfloor k/m \rfloor$.

\begin{lemma}\label{lemNumberTheory}
For $w\in\Z^{n-1}$ and $r\in\Z$, let $\sigma_w(r)=\sum_{i=1}^{n-1}[w_i+r]_{n+1}$.
\begin{enumerate}[i)]
 \item If $n$ is even, then $|\{\sigma_w(r):r\in\{0,1,\ldots,n\}\}|=n+1$.
 \item If $n$ is odd, then no three of the numbers $\sigma_w(r)$, $r\in\{0,1,\ldots,n\}$, are pairwise equal, and for $r,r'\in\Z$, the difference $|\sigma_w(r)-\sigma_w(r')|$ is even.
\end{enumerate}
\end{lemma}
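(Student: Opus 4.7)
The plan is to analyze $\sigma_w$ through its forward differences $\Delta_w(r) := \sigma_w(r+1)-\sigma_w(r)$ and extract both parts of the lemma from a single divisibility identity.

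First I would compute $\Delta_w(r)$ explicitly. For each $i$, the map $r\mapsto [w_i+r]_{n+1}$ either increases by $1$ (when $[w_i+r]_{n+1}<n$) or drops by $n$ (when $[w_i+r]_{n+1}=n$). Letting $k_r=|\{i\in[n-1]:[w_i+r]_{n+1}=n\}|$, this gives the clean formula
\[
\Delta_w(r)=(n-1)-(n+1)\,k_r.
\]
As a sanity check, summing over a full period $r=0,\ldots,n$ and using that each $i$ hits the value $n$ exactly once yields $\sum_r k_r=n-1$, consistent with $\sum_r\Delta_w(r)=0$.

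Now suppose $\sigma_w(r)=\sigma_w(r')$ for some $0\le r<r'\le n$. Telescoping the differences gives
\[
(r'-r)(n-1)=(n+1)\,K,\qquad K:=\sum_{s=r}^{r'-1}k_s\in\Z_{\ge 0}.
\]
For part (i), $n$ is even, so $n-1$ and $n+1$ are coprime; hence $n+1$ must divide $r'-r$, which is impossible since $0<r'-r\le n$. This proves injectivity of $\sigma_w$ on $\{0,1,\ldots,n\}$.

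For part (ii), $n$ is odd, so $\gcd(n-1,n+1)=2$ and the identity forces $\frac{n+1}{2}\mid r'-r$. Since $0<r'-r\le n$, the only possibility is $r'-r=\frac{n+1}{2}$. Consequently, if three indices $r_1<r_2<r_3$ in $\{0,\ldots,n\}$ shared a common $\sigma_w$-value, both $r_2-r_1$ and $r_3-r_2$ would equal $\frac{n+1}{2}$, forcing $r_3-r_1=n+1>n$, a contradiction. The parity claim is even easier: when $n$ is odd, both $n-1$ and $n+1$ are even, so $\Delta_w(r)$ is always even, and iterating shows $\sigma_w(r)-\sigma_w(r')$ is even for every $r,r'\in\Z$.

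There is no real obstacle here beyond correctly identifying the step sizes of $r\mapsto[w_i+r]_{n+1}$ and bookkeeping the resulting telescoping identity; the lemma then falls out of the two elementary arithmetic facts $\gcd(n-1,n+1)\in\{1,2\}$ and the parity of $n\pm 1$. The only place one has to be slightly careful is the "no three equal" statement in (ii), which requires noting that the spacing $\frac{n+1}{2}$ is forced and then a pigeonhole-type argument on $\{0,\ldots,n\}$.
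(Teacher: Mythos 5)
Your proof is correct and follows essentially the same route as the paper: both reduce the question to the telescoping identity $(r'-r)(n-1)=(n+1)K$ with $K\geq 0$ an integer count of wrap-arounds, and then apply $\gcd(n-1,n+1)\in\{1,2\}$ together with the range restriction $0<r'-r\leq n$. The only cosmetic difference is that you count indices with $[w_i+r]_{n+1}=n$ while the paper counts those with $[w_i+j]_{n+1}=0$ (the same quantity shifted by one step).
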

\begin{proof}
For $j\in\Z$, denote $s_w(j)=|\{i\in[n-1]:[w_i+j]_{n+1}=0\}|$.
With this notation, we find that
\begin{align*}
\sigma_w(0)&=\sum_{i=1}^{n-1}[w_i]_{n+1}=\sum_{i=1}^{n-1}[w_i+1]_{n+1}-(n-1-s_w(1))+ns_w(1)\\
&=\sigma_w(1)+(n+1)s_w(1)-(n-1).
\end{align*}
Therefore, we have
\begin{align}
\sigma_w(k)=\sigma_w(0)-(n+1)\sum_{j=1}^ks_w(j)+k(n-1),\quad\textrm{for}\quad k=1,\ldots,n.\label{eqn_sigma_w_k}
\end{align}
This implies that, for $k,\ell\in\{0,1,\ldots,n\}$ with $k<\ell$, we have $\sigma_w(k)=\sigma_w(\ell)$ if and only if
\begin{align}
(\ell-k)(n-1)=(n+1)\sum_{j=k+1}^\ell s_w(j).\label{eqn_sigma_w_k_equals_l}
\end{align}
This number lies in $\{1,\ldots,n(n-1)\}$ and is a common multiple of $n-1$ and $n+1$.
Now, if $n$ is even, then $\gcd(n-1,n+1)=1$, and hence there is no $k<\ell$ satisfying~\eqref{eqn_sigma_w_k_equals_l}, which proves \romannumeral1).

So, let $n$ be odd.
Then $\gcd(n-1,n+1)=2$, and hence $(n-1)(n+1)/2$ is the only common multiple of $n-1$ and $n+1$ in $\{1,\ldots,n(n-1)\}$.
Assume that there are $k,l,m\in\{0,1,\ldots,n\}$ with $k<\ell<m$ and $\sigma_w(k)=\sigma_w(\ell)=\sigma_w(m)$.
It follows that $\ell-k=m-k=m-\ell=(n+1)/2$ and hence $k=\ell=m$.
This contradiction proves the first claim of~\romannumeral2).
Moreover, $n-1$ and $n+1$ are even so that by~\eqref{eqn_sigma_w_k}, we see that the difference between any $\sigma_w(r)$ and $\sigma_w(r')$ is an even number.
\end{proof}

\begin{proof}[Proof of \cref{prop_mu_j_simplex}]
\romannumeral1): For the computation of the covering radius $\mu_n(T_n)$ it turns out to be more convenient to transform $T_n$ to a multiple of the standard simplex $S_{\bf 1}=\conv\{0,e_1,\ldots,e_n\}$.
To this end, let $A=(a_{ij})\in\Z^{n\times n}$ be the matrix with entries $a_{ii}=n$, for $i\in[n]$, and $a_{ij}=-1$, for $i,j\in[n]$ with $i\neq j$.
Then, we have $AT_n=(n+1)S_{\bf 1}-{\bf 1}$ and writing $\Lambda_n=A\Z^n$, we thus get
\[\mu_n(T_n)=\mu_n(S_{\bf 1},\Lambda_n)/(n+1).\]
Based on this identity and \cref{thmDiameterCoveringRadius}, we infer that
\begin{align}
\mu_n(T_n)&=\frac{n}{2}\quad\text{ if and only if }\quad\diam_{\bf 1}(\LG_n^+/\Lambda_n)=\binom{n}{2}.\label{eqnCovRadLatGraph}
\end{align}
The sublattice $\Lambda_n\subseteq\Z^n$ has a nice structure.
For instance, we have
\[\Lambda_n=\bigcup_{i=0}^n\left(i\cdot{\bf 1}+(n+1)\Z^n\right)\quad\textrm{and}\quad\det(\Lambda_n)=(n+1)^{n-1}.\]
Considering the fundamental cell of $\Lambda_n$ that has vertices $\{0,n+1\}^{n-1}\times\{0\}$ and ${\bf 1}+\{0,n+1\}^{n-1}\times\{0\}$, we see that the quotient lattice graph $\LG_n^+/\Lambda_n$ can be described as follows.
Its vertex set is given by $\{0,1,\ldots,n\}^{n-1}$ and $(x,y)$ is a directed edge if and only if $y-x\in\{e_1,\ldots,e_{n-1},-{\bf 1}\}$ modulo $n+1$ (see \cref{figQuotientLatticeGraphDim3}).
Notice that $e_1,\ldots,e_{n-1}$ and ${\bf 1}$ are points in $\R^{n-1}$ here.
We now prove that the diameter of $\LG_n^+/\Lambda_n$ is as stated in~\eqref{eqnCovRadLatGraph}.

\begin{figure}[ht]
\includegraphics{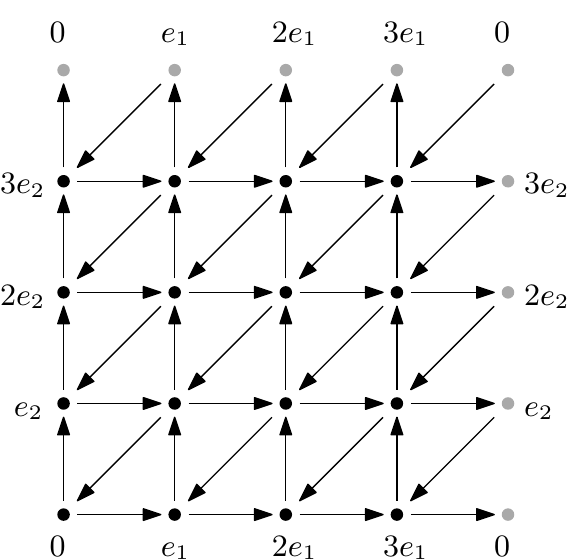}
\caption{The quotient lattice graph $\LG_3^+/\Lambda_3$.}
\label{figQuotientLatticeGraphDim3}
\end{figure}

We first argue that $\diam_{\bf 1}(\LG_n^+/\Lambda_n)\leq\binom{n}{2}$.
For this to be true we need to show that there is a directed path in $\LG_n^+/\Lambda_n$ from $0+\Lambda_n$ to $w+\Lambda_n$ of length at most $\binom{n}2$, for every $w\in\{0,1,\ldots,n\}^{n-1}$.
By the definition of $\LG_n^+/\Lambda_n$ this amounts to finding a representation
\begin{align}
w&=r_1e_1+\ldots+r_{n-1}e_{n-1}-r_n{\bf 1},\label{eqnRepresentation}
\end{align}
for some $r_1,\ldots,r_n\in\Z$ such that $\sum_{i=1}^n[r_i]_{n+1}\leq\binom{n}2$.
Observe that once we fix $r_n\in\{0,1,\ldots,n\}$, we have $r_i=w_i+r_n$, for $i\in[n-1]$.
Hence, there exists a desired representation of~$w$ if and only if there is an $r\in\{0,1,\ldots,n\}$ such that
\begin{align}
\sigma_w(r)&=\sum_{i=1}^{n-1}[w_i+r]_{n+1}\leq\binom{n}{2}-r.\label{eqnConditionDiameter}
\end{align}
For the sake of contradiction, we assume that for every $r\in\{0,1,\ldots,n\}$ the inequality~\eqref{eqnConditionDiameter} does not hold.
Notice that, by definition of $[k]_m$, we have
\begin{align}
\sum_{r=0}^n\sigma_w(r)&=\sum_{i=1}^{n-1}\sum_{r=0}^n[w_i+r]_{n+1}=\sum_{i=1}^{n-1}\sum_{j=0}^n j=(n+1)\binom{n}2.\label{eqn_sum_sigma_w_l}
\end{align}
Now, in the case that $n$ is even, \cref{lemNumberTheory}~\romannumeral1) shows that the numbers $\sigma_w(r)$, $r\in\{0,1,\ldots,n\}$, are pairwise different, and hence by our assumption
\begin{align*}
\sum_{r=0}^n\sigma_w(r)&>\sum_{r=0}^n\left(\binom{n}{2}-r\right)+\sum_{r=0}^n r=(n+1)\binom{n}2,
\end{align*}
contradicting~\eqref{eqn_sum_sigma_w_l}.
The case that $n$ is odd is similar.
By \cref{lemNumberTheory}~\romannumeral2), no three of the numbers $\sigma_w(r)$, $r\in\{0,1,\ldots,n\}$, are pairwise equal, and moreover, any two different of these numbers differ by at least two.
Therefore,
\begin{align*}
\sum_{r=0}^n\sigma_w(r)&\geq\sum_{r=0}^n\left(\binom{n}{2}-r+1\right)+2\sum_{j=0}^{\frac{n-1}{2}}(2j)=(n+1)\binom{n}2+\frac12(n+1),
\end{align*}
contradicting~\eqref{eqn_sum_sigma_w_l} again.
In conclusion, there must be an $r\in\{0,1,\ldots,n\}$ satisfying~\eqref{eqnConditionDiameter}, and hence the diameter of $\LG_n^+/\Lambda_n$ is at most $\binom{n}2$ as claimed.

In order to see that this bound is best possible, we consider the vertex $w=(2,3,\ldots,n)$ of $\LG_n^+/\Lambda_n$.
In any representation of~$w$ of the form~\eqref{eqnRepresentation} with $r=r_n\in\{0,1,\ldots,n\}$, we have $r_i=w_i+r=i+1+r$, for $i\in[n-1]$.
Therefore, if $r=n$, then $\sum_{i=1}^{n-1}[r_i]_{n+1}=\sum_{i=1}^{n-1}i=\binom{n}2>\binom{n}2-r$, and if $r<n$, then
\begin{align*}
\sum_{i=1}^{n-1}[r_i]_{n+1}&=\sum_{i=1}^{n-r-1}(i+1+r)+\sum_{i=n-r}^{n-1}(i+r-n)\\
&=\binom{n}2-r+(n-r-1)\geq\binom{n}2-r,
\end{align*}
with equality only for $r=n-1$.
This means, that there is no path from $0+\Lambda_n$ to $w+\Lambda_n$ of length less than $\binom{n}2$, and thus $\diam_{\bf 1}(\LG_n^+/\Lambda_n)=\binom{n}{2}$.
In view of~\eqref{eqnCovRadLatGraph}, we have thus proven that $\mu_n(T_n)=n/2$.

\romannumeral2): The exponential upper bound on the covering product of $T_n$ can be derived from $\mu_n(T_n)\leq n/2$ only.
Indeed, let $i\in[n]$ be fixed and let $p=i/n\in(0,1]$.
Then, \[\mu_i(T_n)\leq\mu_n(T_n)=\frac{n}{2}= i\cdot\frac{1}{2p}.\]
Hence, for any $j\geq i$, putting $p'=j/n$, we have that $\mu_j(T_n)\leq j\cdot\frac{1}{2p'}\leq j\cdot\frac{1}{2p}$.
Conclusively, using the bound $\mu_j(T_n)\leq j$, for $j\leq i-1$ (see~\eqref{eqnCoverMinsSLatticeimplices}), we get
\begin{align}
\mu_1(T_n)\cdot\ldots\cdot\mu_n(T_n)\vol(T_n)&\leq1\cdot2\cdot\ldots\cdot(np-1)\frac{np}{2p}\cdot\ldots\cdot\frac{n}{2p}\cdot\frac{n+1}{n!}\nonumber\\
&=\frac{n+1}{(2p)^{n(1-p)+1}}\leq\frac{n+1}{\left((2p)^{1-p}\right)^n}.\label{eqnProdFunctTn}
\end{align}
The function $p\mapsto(2p)^{1-p}$ has its maximum in $(0,1]$ attained at a value $q>0.7273$ and is monotonically increasing in the interval $(0,q]$.
For $n\geq6$, we choose $i=\lfloor 7n/10 \rfloor$, which gives $8/15\leq7/10-1/n<p\leq7/10$.
Therefore, we can plug $p=8/15$ into the bound~\eqref{eqnProdFunctTn}, implying the desired estimate.
\end{proof}

\subsection*{Acknowledgments}
We thank Iskander Aliev for pointing us to the connection of covering radii of simplices and diameters of quotient lattice graphs.
We are grateful to Peter Gritzmann, Martin Henk, and Mar\'{i}a A.~Hern\'{a}ndez Cifre for providing opportunities for mutual research visits of the authors and for useful discussions on the topics of this paper.

\bibliographystyle{amsplain}
\bibliography{mybib}

\end{document}